\DeclareMathOperator{\RE}{Re}
\numberwithin{equation}{section}
\newtheorem{theorem}{Theorem}[section]
\newtheorem{lemma}[theorem]{Lemma}
\theoremstyle{remark}
\begin{document}
\author[SUBZAR BEIG]{SUBZAR BEIG}

\address{Department of Mathematics, University of Delhi,
Delhi--110 007, India}
\email{beighsubzar@gmail.com}
\author[V. Ravichandran]{V. Ravichandran}

\address{Department of Mathematics, University of Delhi,
Delhi--110 007, India}
\email{vravi68@gmail.com}

\title[Directional Convexity of Harmonic Mappings ]{directional convexity of harmonic mappings\boldmath}

\begin{abstract}The convolution properties are discussed for the complex-valued harmonic functions in the unit disk $\mathbb{D}$ constructed  from the harmonic shearing of the analytic function $\phi(z):=\int_0^z (1/(1-2\xi\textit{e}^{\textit{i}\mu}\cos\nu+\xi^2\textit{e}^{2\textit{i}\mu}))\textit{d}\xi$, where $\mu$ and $\nu$ are real numbers. For any real number $\alpha$ and harmonic function $f=h+\overline{g}$, define  an analytic function $f_{\alpha}:=h+\textit{e}^{-2\textit{i}\alpha}g$. Let $\mu_1$ and $\mu_2$ $(\mu_1+\mu_2=\mu)$ be real numbers, and  $f=h+\overline{g}$ and $F=H+\overline{G}$ be  locally-univalent and sense-preserving harmonic functions such that $f_{\mu_1}*F_{\mu_2}=\phi$. It is shown that the convolution $f*F$ is univalent and convex in the direction of $-\mu$, provided it is locally univalent and sense-preserving. Also, local-univalence of the above convolution $f*F$ is shown for some specific analytic dilatations of $f$ and $F$. Furthermore, if $g\equiv0$ and both the analytic functions $f_{\mu_1}$ and $F_{\mu_2}$ are  convex, then the convolution $f*F$ is shown to be convex. These results extends the work done by Dorff \textit{et al.} to a larger class of functions.

\end{abstract}

\subjclass[2010]{31A05; 30C45} 
\keywords{harmonic mappings; convex mappings;  convolution; directional convexity.}

\thanks{The first author is supported by  a Junior Research Fellowship from University Grants Commission, New Delhi, India.}
\maketitle

\section{Introduction}
Let $\mathcal{H}$ be the class of all complex-valued harmonic functions $f$ defined on the unit disk $\mathbb{D}=\left\lbrace z\in\mathbb{C}: |z|<1\right\rbrace$. Such functions can be expressed as $f=h+\overline{g}$, where $h$ and $g$ are analytic functions on $\mathbb{D}$ and are respectively known as analytic and co-analytic parts of $f$. We consider the functions to be the normalized one, that is functions $f$ in  $\mathcal{H}$ satisfy the conditions, $h(0)=h_z(0)-1=g(0)=0$. Let $\mathcal{S}_H$ be the sub-class of $\mathcal{H}$ consisting of all univalent harmonic functions, and let $\mathcal{S}_H^0:=\left\lbrace f\in\mathcal{S}_H:f_{\bar{z}}(0)=0\right\rbrace$. The sub-classes $\mathcal{K}_H$, $\mathcal{S}^{*}_H$ and $\mathcal{C}_H$ of $\mathcal{S}_H$ (resp. $\mathcal{K}_H^0$, $\mathcal{S}^{*0}_H$ and $\mathcal{C}_H^0$ of $\mathcal{S}_H^0$), which maps $\mathbb{D}$ respectively onto convex, starlike and close-to-convex domains, were studied by Cluine and Sheil-Small in \cite{cluine}. With the co-analytic part $g\equiv 0$, the class $\mathcal{S}_H^0$ reduces to  $\mathcal{S}$, the class of all  normalized analytic univalent mappings in $\mathbb{D}$. The classes $\mathcal{K}$, $\mathcal{S}^*$ and $\mathcal{C}$ (respectively known as convex, starlike and close-to-convex analytic functions) are respectively the sub-classes of $\mathcal{K}_H^0$, $\mathcal{S}^{*0}_H$ and $\mathcal{C}_H^0$, consisting of all functions $f=h+\overline{g}$ with $g\equiv 0$. One of the important fields in the geometric function theory is the study of the convolution (or Hardmard product) of functions. Let the functions $\phi_1$ and $\phi_1$ be analytic in $\mathbb{D}$, with the Taylor series expansion as:\[ \phi_1(z)=\sum_{n=0}^{\infty}a_n z^n\quad\text{ and}\quad \phi_2(z)=\sum_{n=0}^{\infty}b_n z^n.\]Then the convolution $*$ of $\phi_1$ and $\phi_1$ is defined as: \[(\phi_1*\phi_2)((z):=\sum_{n=0}^{\infty}a_nb_n z^n.\]Also, the convolution $*$ of two harmonic functions $f_1=h_1+\overline{g_1}$ and $f_2=h_2+\overline{g_2}$ is defined as: $f_1*f_2:=h_1*h_1+\overline{g_1*g_2}$, and the convolution $\tilde{*}$ of the  analytic function $\phi$ with the harmonic function $f=h+\overline{g}$ is defined as: $f\tilde{*}\phi:=h*\phi+\overline{g*\phi}$.\

In \cite{rusch}, Ruscheweyh and Sheil-Small showed that the class $\mathcal{K}$ is closed under convolution. That is, if  the functions  $\phi_1$, $\phi_1\in\mathcal{K}$, then the function $\phi_1*\phi_2\in\mathcal{K}$. However, such a result is not true for the corresponding class $\mathcal{K}_H^0$ of harmonic functions. In the case of harmonic mappings, the convolution need not be univalent. In this direction Cluine and Sheil-Small proposed the problem, known as \textit{multiplier problem}:  if the function $f\in\mathcal{K}_H^0$, then what are the functions $g\in\mathcal{K}_H^0$ such that  $f*g\in\mathcal{K}_H^0$? This problem was partially solved by Ruscheweyh and Salins in \cite{rushsalin}. In Section 3, we prove that the convolution of some analytic convex functions with non-convex  harmonic functions belongs to $\mathcal{K}_H^0$. In particular, it is shown that the convolution of the analytic function $\int_0^z \left(1/(1-2\xi\textit{e}^{\textit{i}\mu}\cos\nu+\xi^2\textit{e}^{2\textit{i}\mu})\right)\textit{d}\xi$ function with the locally univalent and sense-preserving harmonic function $f=h+\overline{g}$, satisfying $h(z)+\textit{e}^{-2\textit{i}\mu}g(z)=z/(1-z)$, is convex. Another important class, which is of our interest as well in this paper, is the class of univalent functions convex in a particular direction. A domain $\mathcal{D}$ is said to be convex in the direction of $\gamma$ $(0\leq\gamma<\pi)$, if every line parallel to the line joining origin to the point $\textit{e}^{\textit{i}\gamma}$ has connected intersection with $\mathcal{D}$. If $\gamma=0$ (or $\pi/2)$; such a domain is said to be convex in the direction of real (or imaginary) axis. A function is said to be convex in some direction, if it maps $\mathbb{D}$ to a domain which is convex in that particular direction. Such functions are close to convex. Functions convex in every direction are convex functions. In \cite{cluine}, Cluine and Sheil-Small gave a result, which gives a method known as  method of \textit{Shear Construction}, to check the convexity in a particular direction or convexity of harmonic functions. In particular, they gave the following result.
\begin{lemma}\label{p3lema01}\cite{cluine}
A locally univalent and sense-preserving harmonic function $f=h+\overline{g}$ on $\mathbb{D}$ is univalent and maps $\mathbb{D}$ onto a domain convex in the direction of $\phi$ if and only if the analytic mapping $h-\emph{e}^{2\textit{i}\phi}g$ is univalent and maps $\mathbb{D}$ onto a  domain convex in the direction of $\phi$.
\end{lemma}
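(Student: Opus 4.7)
The plan is to reduce to $\phi=0$ by rotation and then exploit an algebraic identity that forces the horizontal level-set structures of $f$ and of the analytic shear $F:=h-e^{2i\phi}g$ to coincide, after which a monotonicity argument along level curves establishes univalence and directional convexity simultaneously. Setting $\tilde h := e^{-i\phi}h$ and $\tilde g := e^{i\phi}g$, one has $e^{-i\phi}f = \tilde h + \overline{\tilde g}$ and $e^{-i\phi}F = \tilde h - \tilde g$; since rotation preserves local univalence, sense-preservation, and convexity in a direction (merely shifting that direction by $-\phi$), it suffices to prove the lemma for $\phi=0$ with $F=h-g$. The key identity is
$$f - F = \bar g + g = 2\,\RE g,$$
which is real-valued, so $\IM f \equiv \IM F$ on $\mathbb{D}$, and for each $c \in \mathbb{R}$ the level set $\gamma_c := \{z \in \mathbb{D}: \IM f(z)=c\}$ coincides with $\{z \in \mathbb{D}: \IM F(z)=c\}$. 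Because $|h'|>|g'|$, we have $\partial_z(\IM f)=(h'-g')/(2i)\neq 0$ throughout $\mathbb{D}$, so each nonempty $\gamma_c$ is a smooth embedded $1$-manifold.

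Parametrizing a component of $\gamma_c$ by $z(t)$, one computes
$$\frac{d}{dt}F(z(t)) = (h'(z)-g'(z))\,z'(t), \qquad \frac{d}{dt}f(z(t)) = h'(z)\,z'(t) + \overline{g'(z)\,z'(t)}.$$
Both derivatives are real along $\gamma_c$ (since $\IM F$ and $\IM f$ are constant there) and both are nonzero: $(h'-g')z'\neq 0$ by sense-preservation, while $|h'z'|>|g'z'|$ forces $h'z' + \overline{g'z'}\neq 0$ as well. Thus on each $\gamma_c$, both $f$ and $F$ traverse the horizontal line $\IM w = c$ strictly monotonically.

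For the backward direction, suppose $F$ is univalent with $F(\mathbb{D})$ convex in the real direction. Then $F(\mathbb{D})\cap\{\IM w = c\}$ is a single interval whose $F$-preimage is a single arc, so $\gamma_c$ is connected; monotonicity of $f$ along $\gamma_c$ yields injectivity of $f|_{\gamma_c}$, and injectivity across distinct level sets is automatic because $\IM f$ separates them, so $f$ is univalent, and $f(\gamma_c)$ is a continuous image of a connected arc, hence an interval in $\{\IM w=c\}$, proving $f(\mathbb{D})$ is convex in the real direction. The forward direction runs along the same lines with the roles of $f$ and $F$ swapped. The main subtlety — and the reason both implications have content — is the topological claim that each $\gamma_c$ is a single embedded arc: closed loops are excluded by harmonicity of $\IM f$ via the maximum principle, but ruling out multiple components is precisely where the hypothesised convexity of one image must be invoked, since it forces each horizontal slice of that image to be a single interval, hence its preimage under an injective map to be a single arc.
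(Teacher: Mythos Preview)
The paper does not supply a proof of this lemma at all: Lemma~\ref{p3lema01} is simply quoted from Clunie and Sheil-Small \cite{cluine} and is used throughout as a black box, so there is no in-paper argument against which to compare yours.

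Your argument is correct and is in substance the original Clunie--Sheil-Small shear-construction proof. The rotation reduction to $\phi=0$, the identity $\IM f=\IM F$ with $F=h-g$, the observation that the common level sets $\gamma_c$ are smooth $1$-manifolds because $F'=h'-g'\neq 0$, and the monotonicity of both $f$ and $F$ along each $\gamma_c$ (using $|h'|>|g'|$ to force $h'z'+\overline{g'z'}\neq 0$) are exactly the ingredients of the classical proof. You also correctly isolate the one nontrivial topological point: connectedness of $\gamma_c$ is not automatic from the maximum principle alone (that only excludes closed loops), and must be obtained from the hypothesis on whichever of $f$ or $F$ is assumed univalent with image convex in the real direction, since then the horizontal slice of the image is an interval and its preimage under a homeomorphism is a single arc. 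With that in hand, strict monotonicity along the single arc $\gamma_c$ gives injectivity on each level set, different level sets are separated by their common imaginary part, and the image of each $\gamma_c$ is an interval, which is precisely convexity in the real direction. Both implications then follow symmetrically.
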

A function $f_1=h_1+\overline{g_1}\in\mathcal{K}_H^0$ is known as a right half-plane mapping, if it maps $\mathbb{D}$ onto the right half-plane $R=\left\lbrace w\in\mathbb{C}:\RE(w)>-1/2\right\rbrace$. For $\pi/2<\mu_2<\pi$, a function   $f_2=h_2+\overline{g_2}\in\mathcal{K}_H^0$ that maps $\mathbb{D}$ onto the vertical strip \[ \Omega_{\mu}=\left\lbrace w\in\mathbb{C}:\frac{\mu-\pi}{2\sin\mu}<\RE(w)<\frac{\mu}{2\sin\mu}\right\rbrace\] is known as a vertical strip mapping, and the class of all such mappings is denoted by $\mathcal{S}^0(\Omega_{\mu})$. Such mappings; $f_1$ and$f_2$ satisfy the following, (see \cite{abu,dorffvstrip})
\begin{equation}\label{p3eq02}
h_1(z)+g_1(z)=\frac{z}{1-z},
\end{equation}and
\begin{equation}\label{p3eq03}
h_2(z)+g_2(z)=\frac{1}{2\textit{i}\sin\mu}\log\left(\frac{1+z\textit{e}^{\textit{i}\mu}}{1+z\textit{e}^{-\textit{i}\mu}}\right).
\end{equation}If $\omega(z)=-z$ is the analytic dilatation of the function $f_1=h_1+\overline{g_1}$, that is $g_1'(z)=-zh_1'(z)$, then, in view of (1.1), we get\begin{equation}\label{p3eq04}
h_1(z)=\frac{1}{2}\left(\frac{z}{1-z}+\frac{z}{(1-z)^2}\right)\quad \text{and} \quad g_1(z)=\frac{1}{2}\left(\frac{z}{1-z}-\frac{z}{(1-z)^2}\right).
\end{equation}
This right half-plane mapping acts as extremal function for many problems for the class of convex harmonic functions, (see for example \cite{cluine}).
Also, in \cite{dnowak}, it is shown that, for $\alpha$ real, a function $f=h+\overline{g}\in\mathcal{K}_H^0$  that maps $\mathbb{D}$ onto the slanted half-plane $H_{\alpha}=\left\lbrace w\in\mathbb{C}:\RE(\textit{e}^{\textit{i}\alpha}w)>-1/2\right\rbrace$ satisfy
\begin{equation}\label{p3eq05}
h(z)+\textit{e}^{-2\textit{i}\alpha}g(z)=\frac{z}{1-\textit{e}^{\textit{i}\alpha}z}.
\end{equation} Such a mapping is known as a slanted half-plane mapping, and the class of all such mappings is denoted by $\mathcal{S}^0(H_{\alpha})$. If $\alpha=0$, the above mapping is a right half-plane mapping. We prove a similar result for the strip mappings. For $\pi/2<\mu<\pi$ and real number $\alpha$, if a function $f=h+\overline{g}\in\mathcal{K}_H^0$ maps $\mathbb{D}$ onto the slanted strip \[ \Omega_{\mu,\alpha}=\left\lbrace w\in\mathbb{C}:\frac{\mu-\pi}{2\sin\mu}<\RE(\textit{e}^{\textit{i}\alpha}w)<\frac{\mu}{2\sin\mu}\right\rbrace,\] we call it as a slanted strip mapping and denote the class of all such mappings by $\mathcal{S}^0(\Omega_{\mu,\alpha})$. Clearly $\mathcal{S}^0(\Omega_{\mu,0})=\mathcal{S}^0(\Omega_{\mu})$. Following result gives an explicit description of such mappings.
\begin{lemma}If the function $f=h+\overline{g}\in\mathcal{S}^0(\Omega_{\mu,\alpha})$, then\[h(z)+\textit{e}^{-2\textit{i}\alpha}g(z)=\frac{\textit{e}^{-\textit{i}\alpha}}{2\textit{i}\sin\mu}\log\left(\frac{1+z\textit{e}^{\textit{i}(\alpha+\mu)}}{1+z\textit{e}^{\textit{i}(\alpha-\mu)}}\right).\]
\end{lemma}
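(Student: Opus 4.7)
The plan is to reduce the slanted case to the ordinary vertical-strip case (\ref{p3eq03}) by an appropriate rotation, and then read off the representation. Given $f=h+\overline{g}\in\mathcal{S}^0(\Omega_{\mu,\alpha})$, I would introduce the rotated mapping
\begin{equation*}
F(z):=e^{i\alpha}f(e^{-i\alpha}z)=H(z)+\overline{G(z)},
\end{equation*}
where $H(z):=e^{i\alpha}h(e^{-i\alpha}z)$ and $G(z):=e^{-i\alpha}g(e^{-i\alpha}z)$ are analytic. Using the normalization of $f$, a short Wirtinger calculation gives $H(0)=0$, $H'(0)=1$, and $G(0)=G'(0)=0$, so $F$ carries the standard normalization of $\mathcal{K}_H^0$. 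Both rotations involved are biholomorphic and convexity-preserving, so $F$ is again convex and univalent.

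Next I would identify $F(\mathbb{D})$. The defining condition $w\in\Omega_{\mu,\alpha}$ reads $(\mu-\pi)/(2\sin\mu)<\RE(e^{i\alpha}w)<\mu/(2\sin\mu)$, which is exactly $e^{i\alpha}w\in\Omega_\mu$; hence $e^{i\alpha}\Omega_{\mu,\alpha}=\Omega_\mu$ and therefore $F(\mathbb{D})=\Omega_\mu$, so $F\in\mathcal{S}^0(\Omega_\mu)$. Applying (\ref{p3eq03}) to $F=H+\overline{G}$ gives
\begin{equation*}
e^{i\alpha}h(e^{-i\alpha}z)+e^{-i\alpha}g(e^{-i\alpha}z)=\frac{1}{2i\sin\mu}\log\!\left(\frac{1+ze^{i\mu}}{1+ze^{-i\mu}}\right).
\end{equation*}
Substituting $z\mapsto e^{i\alpha}z$ and dividing through by $e^{i\alpha}$ turns the left side into $h(z)+e^{-2i\alpha}g(z)$ and the right side into precisely the expression asserted in the lemma.

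The main obstacle here is nothing deeper than the bookkeeping of the various phase factors and verifying that $F$ inherits the normalized convex harmonic structure from $f$; once that is in place the lemma is essentially a one-line consequence of (\ref{p3eq03}). I would note, as a sanity check, that setting $\alpha=0$ recovers (\ref{p3eq03}) exactly, and that the appearance of $h+e^{-2i\alpha}g$ on the left is consistent with Lemma~\ref{p3lema01}: since $\Omega_{\mu,\alpha}$ is convex in the direction $\pi/2-\alpha$, the shear lemma singles out the analytic combination $h-e^{2i(\pi/2-\alpha)}g=h+e^{-2i\alpha}g$, which is exactly the one that the calculation forces.
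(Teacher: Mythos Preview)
Your proof is correct and follows essentially the same route as the paper: both arguments rotate $f$ to obtain a normalized mapping $H+\overline{G}\in\mathcal{S}^0(\Omega_\mu)$, invoke \eqref{p3eq03}, and then undo the rotation by substituting $z\mapsto e^{i\alpha}z$ and dividing by $e^{i\alpha}$. Your additional sanity checks (the $\alpha=0$ specialization and the consistency with Lemma~\ref{p3lema01}) are a nice touch not present in the paper.
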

\begin{proof}
Let the function $f=h+\overline{g}\in\mathcal{S}^0(\Omega_{\mu,\alpha})$. Then, the function $z\rightarrow\textit{e}^{\textit{i}\alpha}f(z)$ maps $\mathbb{D}$ onto the vertical strip $\Omega_{\mu}$, and hence the function \begin{equation}\label{p3eq05a}H(w)+\overline{G(w)}:=\textit{e}^{\textit{i}\alpha}h(\textit{e}^{-\textit{i}\alpha}w)+\overline{\textit{e}^{-\textit{i}\alpha}g(\textit{e}^{-\textit{i}\alpha}w)},
\end{equation} where $w=z\textit{e}^{\textit{i}\alpha}$, maps $\mathbb{D}$ onto the vertical strip $\Omega_{\mu}$. Also, normalization of $f$ gives that $H(0)=H_w(0)-1=G(0)=G_w(0)=0$. Therefore, the function $H+\overline{G}\in\mathcal{S}^0(\Omega_{\mu})$. Hence, \eqref{p3eq03} gives
\[H(w)+G(w)=\frac{1}{2\textit{i}\sin\mu}\log\left(\frac{1+w\textit{e}^{\textit{i}\mu}}{1+w\textit{e}^{-\textit{i}\mu}}\right).\]
Substituting the values of $H$ and $G$ from \eqref{p3eq05a} and replacing $w$ by $z\textit{e}^{\textit{i}\alpha}$ in  the above equation, we get the desired result.
\end{proof}
 Using Lemma \ref{p3lema01}, Dorff \cite{dorff} studied the directional convexity of the convolution of right half-plane and vertical strip mappings. Later on, Dorff \textit{et al.}\cite{dnowak} extended such study to slanted  half-plane mappings as well. In these papers the problem of directional convexity of the convolution of such functions is actually reduced to the local univalence and sense-preservity of the convolution function. In fact, they proved the following results.
\begin{lemma}\label{p3lema06}\cite{dnowak}
Let the function $f_k\in\mathcal{S}^0(H_{\gamma_k})$, $(k=1,2)$. Then the function $f_1*f_2\in\mathcal{S}_H^0$ and  is convex in the direction of $-(\gamma_1+\gamma_2)$, if it is locally univalent and sense-preserving in $\mathbb{D}$.
\end{lemma}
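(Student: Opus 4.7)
The plan is to reduce the lemma to the right half-plane case established by Dorff \cite{dorff}, by simultaneously rotating the two factors so that each becomes a standard right half-plane mapping.

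Set $\gamma:=\gamma_1+\gamma_2$, and for $k=1,2$ define the rotated harmonic mapping
\[ F_k(z):=\textit{e}^{\textit{i}\gamma_k}f_k(\textit{e}^{-\textit{i}\gamma_k}z)=H_k(z)+\overline{G_k(z)},\]
where $H_k(z)=\textit{e}^{\textit{i}\gamma_k}h_k(\textit{e}^{-\textit{i}\gamma_k}z)$ and $G_k(z)=\textit{e}^{-\textit{i}\gamma_k}g_k(\textit{e}^{-\textit{i}\gamma_k}z)$. Normalization and local univalence pass from $f_k$ to $F_k$. Replacing $z$ by $\textit{e}^{-\textit{i}\gamma_k}z$ in \eqref{p3eq05} and multiplying through by $\textit{e}^{\textit{i}\gamma_k}$ yields $H_k(z)+G_k(z)=z/(1-z)$, so $F_k\in\mathcal{S}^0(H_0)$ is a standard right half-plane mapping.

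A direct comparison of Taylor coefficients then shows
\[ (F_1*F_2)(z)=\textit{e}^{\textit{i}\gamma}(f_1*f_2)(\textit{e}^{-\textit{i}\gamma}z),\]
so $F_1*F_2$ is merely a rigid rotation of $f_1*f_2$. In particular, $f_1*f_2$ is locally univalent and sense-preserving if and only if $F_1*F_2$ is. Under the standing hypothesis, I would invoke the right half-plane convolution theorem of Dorff \cite{dorff} to conclude that $F_1*F_2$ is univalent and maps $\mathbb{D}$ onto a domain convex in the direction of the real axis. Rotating the image back by $\textit{e}^{-\textit{i}\gamma}$ converts this to convexity of $(f_1*f_2)(\mathbb{D})$ in the direction of $-(\gamma_1+\gamma_2)$.

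The main obstacle is external: this argument leans entirely on Dorff's right half-plane theorem from \cite{dorff}. A self-contained approach would instead apply the shearing Lemma \ref{p3lema01} to $f_1*f_2$ in direction $-\gamma$, reducing matters to proving that $h_1*h_2-\textit{e}^{-2\textit{i}\gamma}(g_1*g_2)$ is univalent and convex in direction $-\gamma$. Extracting a workable closed form or factorization for this analytic shear from \eqref{p3eq05} alone (which gives only one linear relation per factor) and then verifying a Hengartner-Schober type analytic criterion for directional convexity is the step I expect to carry the real technical difficulty.
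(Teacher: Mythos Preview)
Your rotation argument is correct: the identity $(F_1*F_2)(z)=\textit{e}^{\textit{i}\gamma}(f_1*f_2)(\textit{e}^{-\textit{i}\gamma}z)$ checks out on the analytic and co-analytic parts separately, the rotated maps $F_k$ are genuine right half-plane mappings in $\mathcal{S}^0(H_0)$, and Dorff's theorem in \cite{dorff} for the convolution of two right half-plane mappings then gives the conclusion after undoing the rotation.

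The paper does not reprove Lemma~\ref{p3lema06} by rotation. Instead it derives the lemma as the special case $\nu=0$ of its own Theorem~\ref{p3theom10}: one computes
\[
(h_1+\textit{e}^{-2\textit{i}\gamma_1}g_1)(z)*(h_2+\textit{e}^{-2\textit{i}\gamma_2}g_2)(z)=\frac{z}{1-\textit{e}^{\textit{i}(\gamma_1+\gamma_2)}z}=\int_0^z\frac{\textit{d}\xi}{(1-\textit{e}^{\textit{i}(\gamma_1+\gamma_2)}\xi)^2},
\]
which matches \eqref{p3eq11} with $\nu=0$, and then Theorem~\ref{p3theom10} (via Theorem~\ref{p3theom1}, the Ruscheweyh--Sheil-Small convolution inequality, and the Royster--Ziegler criterion of Theorem~\ref{p3theom8}) finishes. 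This is precisely the ``self-contained approach'' you sketch in your final paragraph: the Hengartner--Schober type step you anticipated is handled by Theorem~\ref{p3theom8}, and the factorization issue is absorbed into the general machinery of Theorem~\ref{p3theom1}. What your rotation buys is brevity and a direct appeal to a single earlier theorem; what the paper's route buys is that the same argument simultaneously covers all $\nu$, hence also the strip case of Lemma~\ref{p3lema07} and the new mixed cases.
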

\begin{lemma}\label{p3lema07}\cite{dnowak}
Let the function  $f_1=h_1+\overline{g_1}$ be a right half-plane mapping  and  the function $f_2=h_2+\overline{g_2}$ be a strip mapping as defined above. Then the function $f_1*f_2\in\mathcal{S}_H^0$ and is convex in the direction of the real axis, if it is locally univalent and sense-preserving in $\mathbb{D}$.
\end{lemma}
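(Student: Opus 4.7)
The plan is to reduce the statement, via the shear construction, to an analytic convexity statement about $H - G$; to simplify $H - G$ using a polarization identity for the Hadamard product together with the hypotheses \eqref{p3eq02}, \eqref{p3eq03}; and finally to verify the direction convexity through a Hengartner--Schober positivity condition with a common quadratic factor inherited from the strip mapping.

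By Lemma \ref{p3lema01} applied with $\phi = 0$, since $F := f_1 * f_2 = H + \overline{G}$ is assumed locally univalent and sense-preserving, it suffices to show that the analytic function $H - G = h_1 * h_2 - g_1 * g_2$ is univalent on $\mathbb{D}$ and maps $\mathbb{D}$ onto a domain convex in the direction of the real axis. Using the polarization identity
\[
2(h_1 * h_2 - g_1 * g_2) \;=\; (h_1 + g_1)*(h_2 - g_2) \;+\; (h_1 - g_1)*(h_2 + g_2),
\]
the hypothesis \eqref{p3eq02}, and the fact that $z/(1-z)$ is the identity for Hadamard convolution on analytic functions vanishing at the origin, one obtains
\[
2(H - G) \;=\; (h_2 - g_2) \;+\; (h_1 - g_1)*L, \qquad L := h_2 + g_2.
\]
A short direct computation from \eqref{p3eq03} gives the key identity $L'(z)(1 + z e^{i\mu})(1 + z e^{-i\mu}) \equiv 1$.

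I would then invoke the Hengartner--Schober characterization of direction convexity: an analytic $\phi$ with $\phi(0) = 0$ is univalent on $\mathbb{D}$ and $\phi(\mathbb{D})$ is convex in the direction of the real axis if and only if there exist $\zeta_1, \zeta_2 \in \partial\mathbb{D}$ such that $\RE\bigl[-i(1 - \overline{\zeta_1}\,z)(1 - \overline{\zeta_2}\,z)\phi'(z)\bigr] \ge 0$ in $\mathbb{D}$. The natural common choice is $\zeta_1 = -e^{i\mu}$ and $\zeta_2 = -e^{-i\mu}$, because these are the boundary preimages of the two ``ends at infinity'' of $\Omega_\mu$ and are reflected in the identity for $L'$ above. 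For the first summand, the positivity of $\RE\bigl[-i(1 + z e^{i\mu})(1 + z e^{-i\mu})(h_2 - g_2)'(z)\bigr]$ follows from the convexity of $f_2$ via Lemma \ref{p3lema01}. For the convolution summand, I would use the identity $z\bigl((h_1 - g_1)*L\bigr)'(z) = (h_1 - g_1)(z) * zL'(z)$ together with the partial fraction $zL'(z) = z/((1 + z e^{i\mu})(1 + z e^{-i\mu}))$ and the analytic direction convexity of $h_1 - g_1$ (inherited from the right-half-plane hypothesis on $f_1$ via Lemma \ref{p3lema01}) to secure the same sign condition. Adding the two resulting inequalities produces the required positivity for $(H - G)'(z)$, which, combined with the assumed local univalence and sense-preservation of $H + \overline{G}$, yields the conclusion by Lemma \ref{p3lema01}.

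The principal technical obstacle is the last verification for the convolution term $((h_1 - g_1)*L)'(z)$: one must extract the specific quadratic factor $(1 + z e^{i\mu})(1 + z e^{-i\mu})$ natural to $L$, rather than the one intrinsic to $h_1 - g_1$ alone. The route I would pursue is a Herglotz-type integral representation of $(h_1 - g_1)'$, available because $h_1 - g_1$ is analytic univalent and convex in the direction of the real axis, and then an interchange of the Hadamard convolution with the Herglotz integral to express $((h_1 - g_1)*L)'(z)$ as an average of explicit rational functions whose real part against the desired factor can be checked pointwise using the identity $L'(z)(1 + z e^{i\mu})(1 + z e^{-i\mu}) \equiv 1$.
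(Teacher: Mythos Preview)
Your decomposition is exactly the one the paper uses: with $\mu_1=\mu_2=0$ the paper (Theorem~\ref{p3theom1} specialized in Theorem~\ref{p3theom10}) writes
\[
2\bigl(h_1*h_2-g_1*g_2\bigr)=F_1+F_2,\qquad F_1=(h_1+g_1)*(h_2-g_2),\quad F_2=(h_1-g_1)*(h_2+g_2),
\]
and then proves, for each $F_k$, that $\RE\bigl(zF_k'(z)/zL'(z)\bigr)>0$, where $L=h_2+g_2$ and $zL'(z)=z/\bigl((1+ze^{i\mu})(1+ze^{-i\mu})\bigr)$. Since $(1+ze^{i\mu})(1+ze^{-i\mu})L'(z)\equiv 1$, this positivity is exactly your Royster--Ziegler/Hengartner--Schober inequality with the specific quadratic factor you chose (note: for convexity in the direction of the \emph{real} axis the correct condition from Theorem~\ref{p3theom8} with $\gamma=0$ has no $-i$; your $-i$ is the imaginary-axis version).

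The gap in your proposal is precisely the step you flag as the ``principal technical obstacle.'' For $F_2=(h_1-g_1)*L$ one has
\[
z F_2'(z)=L(z)*z(h_1-g_1)'(z)=L(z)*\Bigl(P_1(z)\,z(h_1+g_1)'(z)\Bigr),\qquad P_1=\frac{(h_1-g_1)'}{(h_1+g_1)'}=\frac{1-\omega_1}{1+\omega_1},
\]
and one must show $\RE\bigl[(L*P_1\psi)/(L*\psi)\bigr]>0$ with $\psi=z(h_1+g_1)'\in\mathcal{S}^*$. This is not a matter of ``interchanging a Herglotz integral with the convolution'' and checking a rational function pointwise; the convolution does not commute with multiplication by the quadratic factor, so the factor natural to $L$ cannot be pulled through. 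What is actually needed is the Ruscheweyh--Sheil-Small convolution lemma (cited as \cite{rusch} in the paper): for $\phi\in\mathcal{K}$, $\psi\in\mathcal{S}^*$ and $\RE P>0$, the quotient $(\phi*P\psi)/(\phi*\psi)$ takes values in the closed convex hull of $P(\mathbb{D})$, hence has positive real part. The paper invokes this lemma directly; your Herglotz route, if completed, would amount to reproving it. Once you replace the vague Herglotz plan by an appeal to that lemma (with $\phi=L\in\mathcal{K}$, $\psi=z/(1-z)^2\in\mathcal{S}^*$, $P=P_1$), your argument coincides with the paper's.
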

Furthermore, after fixing the function $f_1$ to be the right half-plane mapping defined in \eqref{p3eq04}, they proved the local univalence of the convolution function $f_1*f_2$ for some special analytic dilatations of the function $f_2$. In fact by using the above two lemmas, they proved the following results.
\begin{theorem}\label{p3lema08}\cite{dnowak}
Let the function $f_1$ be the right half-plane mapping given by \eqref{p3eq04} and the function $f_2=h_2+\overline{g_2}\in\mathcal{K}_H^0$ be a slanted half-plane mapping. If $\omega(z)=\textit{e}^{\textit{i}\theta}z^n$ is the  analytic dilation of $f_2$, then, for $\theta\in\mathbb{R}$ and $n=1,2$, the convolution $f_1*f_2\in\mathcal{S}_H^0$ and  is convex in the direction of real axis.
\end{theorem}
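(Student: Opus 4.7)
The plan is to reduce the statement to a single dilatation estimate and then verify it. Because $f_1$ is a right half-plane mapping, i.e.\ $\gamma_1=0$ in the notation of Lemma \ref{p3lema06}, once $f_1*f_2$ is known to be locally univalent and sense-preserving in $\mathbb{D}$, Lemma \ref{p3lema06} immediately delivers univalence and convexity in the real-axis direction. Thus the whole content of the theorem is to prove that the dilatation
\[
\widetilde\omega(z)=\frac{(g_1*g_2)'(z)}{(h_1*h_2)'(z)}
\]
of $f_1*f_2$ satisfies $|\widetilde\omega(z)|<1$ for every $z\in\mathbb{D}$.

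I would first exploit the very special form of \eqref{p3eq04}. A short power-series check gives, for any analytic $\psi$ in $\mathbb{D}$ with $\psi(0)=0$,
\[
(h_1*\psi)(z)=\tfrac{1}{2}\bigl(\psi(z)+z\psi'(z)\bigr),\qquad (g_1*\psi)(z)=\tfrac{1}{2}\bigl(\psi(z)-z\psi'(z)\bigr),
\]
so that
\[
(h_1*h_2)'(z)=h_2'(z)+\tfrac{1}{2}z h_2''(z),\qquad (g_1*g_2)'(z)=-\tfrac{1}{2}z g_2''(z).
\]
Combining these with the dilatation relation $g_2'(z)=\textit{e}^{\textit{i}\theta}z^n h_2'(z)$ and the explicit formula
\[
h_2'(z)=\frac{1}{(1-\textit{e}^{\textit{i}\alpha}z)^2\bigl(1+\textit{e}^{\textit{i}(\theta-2\alpha)}z^n\bigr)},
\]
obtained by differentiating \eqref{p3eq05} (where $\alpha$ is the slant parameter of $f_2$), $\widetilde\omega$ collapses into an explicit rational function $P(z)/Q(z)$ whose numerator and denominator are polynomials whose degree is controlled by $n$.

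The main obstacle is the estimate $|P(z)|<|Q(z)|$ on $\mathbb{D}$. My plan is to cast this as the statement that the Schur transform $Q(z)-z^{\deg Q}\overline{P(1/\bar z)}$ has no zeros in the closed unit disk, and to verify it via Cohn's rule (equivalently, the Schur--Cohn test). Since $\textit{e}^{\textit{i}\theta}$ and $\textit{e}^{\textit{i}\alpha}$ appear in $P$ and $Q$ only through unimodular factors, an appropriate rotation of the independent variable absorbs them and reduces the inequality to a parameter-free polynomial test. For $n=1$ the polynomial is of small degree and a single application of Cohn's rule is enough; for $n=2$ one further reduction brings the check down either to the $n=1$ case or to a low-degree polynomial that can be inspected by hand. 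The $n=2$ bookkeeping is the genuinely delicate step.

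Once $|\widetilde\omega(z)|<1$ on $\mathbb{D}$ is established, $f_1*f_2$ is locally univalent and sense-preserving, and Lemma \ref{p3lema06} completes the proof of univalence and of convexity in the direction of the real axis.
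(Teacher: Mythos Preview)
Your plan is sound and essentially reproduces the original argument of Dorff, Nowak and Wo\l oszkiewicz: reduce to local univalence via Lemma~\ref{p3lema06}, exploit the explicit form \eqref{p3eq04} of $f_1$ to write the convolution dilatation as a rational function, and then locate the zeros of numerator versus denominator by Cohn's rule. This works.

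The present paper, however, does not prove Theorem~\ref{p3lema08} directly; it quotes it from \cite{dnowak} and then recovers it (in fact the stronger Theorem~\ref{p3lema010}) as the special case $\nu=0$ of Theorem~\ref{p3theom24}, via the remark following that theorem. The skeleton is the same as yours---compute $\widetilde\omega=-z^{n}p(z)/q(z)$ from Lemma~\ref{p3lema16} and show $|p|<|q|$---but the verification of $|p|<|q|$ is different. Instead of Cohn's rule, the paper writes $q=A+aB$ and $p=\bar a^{2}A+\bar aB$ for suitable $A,B$ and obtains on $|z|=1$ the identity
\[
|q(z)|^{2}-|p(z)|^{2}=(1-|a|^{2})\,|A|^{2}\bigl(1+|a|^{2}+2\RE(aB/A)\bigr),
\]
after which a single real-part estimate handles all $n$ at once and also yields the sharp bound $|a|\le n-1-\sqrt{n^{2}-2n}$ for $n\ge3$. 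Your Cohn-rule approach is cleaner to execute for $n=1,2$ specifically, but it is case-by-case and does not extend as transparently to general $n$ or to the strip parameter $\nu$; the paper's algebraic identity buys uniformity and the generalisation to Theorem~\ref{p3theom24}.

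One small slip: with $\gamma_1=0$ and $\gamma_2=\alpha$ (the slant of $f_2$), Lemma~\ref{p3lema06} gives convexity in the direction $-\alpha$, not the real axis. This is the conclusion stated in Theorem~\ref{p3lema010}; the phrase ``real axis'' in the statement of Theorem~\ref{p3lema08} as quoted here appears to be a transcription artefact, so you should not try to force the real-axis direction out of Lemma~\ref{p3lema06}.
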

\begin{theorem}\label{p3lema09}\cite{dnowak}
Let the function $f_1$ be the right half-plane mapping given by \eqref{p3eq04} and the function $f_2=h_2+\overline{g_2}\in\mathcal{S}^0(\Omega_{\alpha})$ be a vertical strip  mapping. If $\omega(z)=\textit{e}^{\textit{i}\theta}z^n$ is the  analytic dilation of $f_2$, then, for $\theta\in\mathbb{R}$ and $n=1,2$, the convolution $f_1*f_2\in\mathcal{S}_H^0$ and  is convex in the direction of real axis.
\end{theorem}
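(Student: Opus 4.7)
By Lemma \ref{p3lema07}, it suffices to show that $f_1*f_2$ is locally univalent and sense-preserving on $\mathbb{D}$; that is, that its dilatation $\tilde\omega:=(g_1*g_2)'/(h_1*h_2)'$ is analytic in $\mathbb{D}$ and satisfies $|\tilde\omega(z)|<1$ there. The plan is to start from the elementary convolution identities $(z/(1-z))*\phi(z)=\phi(z)$ and $(z/(1-z)^2)*\phi(z)=z\phi'(z)$, valid for any analytic $\phi$ with $\phi(0)=0$, and apply them to the decomposition \eqref{p3eq04} of $f_1$. This gives $h_1*h_2=(h_2+zh_2')/2$ and $g_1*g_2=(g_2-zg_2')/2$. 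Differentiating and substituting $g_2'(z)=e^{i\theta}z^n h_2'(z)$ yields
\[
\tilde\omega(z) \;=\; \frac{-\,e^{i\theta}z^{n}\bigl(n\,h_2'(z)+zh_2''(z)\bigr)}{2\,h_2'(z)+zh_2''(z)}.
\]
For $n=2$ the bracket in the numerator coincides with the denominator, so $\tilde\omega(z)=-e^{i\theta}z^2$ and the required estimate is trivial.

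The substantive case is $n=1$. Combining \eqref{p3eq03} (with $\mu$ replaced by $\alpha$) with $g_2'=e^{i\theta}zh_2'$, I would first extract the explicit formula
\[
h_2'(z)=\frac{1}{(1+e^{i\theta}z)(1+2z\cos\alpha+z^2)},
\]
and then, via logarithmic differentiation and algebraic simplification driven by the cancellation $(1+e^{i\theta}z)-e^{i\theta}z=1$, reduce $\tilde\omega$ to the rational function $\tilde\omega(z)=N(z)/D(z)$ with
\[
D(z)\;=\;2+(e^{i\theta}+2\cos\alpha)z-e^{i\theta}z^{3}
\]
and $N(z)$ a polynomial of degree four. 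The pivotal observation is that $N$ is, up to a unimodular factor, $z$ times the reciprocal polynomial of $D$; a term-by-term coefficient comparison delivers
\[
N(z)\;=\;e^{2i\theta}\,z\,D^{*}(z),\qquad D^{*}(z):=z^{3}\,\overline{D(1/\bar z)}.
\]
Consequently $\tilde\omega(z)=e^{2i\theta}\,z\,D^{*}(z)/D(z)$. Once it is known that all zeros of $D$ lie in $|z|>1$, the quotient $D^{*}/D$ is a finite Blaschke product of degree three, so that $|\tilde\omega(z)|\le|z|<1$ throughout $\mathbb{D}$ by the maximum modulus principle; this in turn forces $\tilde\omega$ to be analytic in $\mathbb{D}$, giving local univalence and sense-preservation.

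The principal obstacle, therefore, is to verify that $D$ is zero-free on $\overline{\mathbb{D}}$ for every $\theta\in\mathbb{R}$ and every $\alpha\in(\pi/2,\pi)$. I would attempt this through the Schur--Cohn criterion applied to the cubic $D$, with the assumption $\cos\alpha\in(-1,0)$ providing exactly the inequalities needed to close the argument; the boundary values on the real axis, $D(1)=2+2\cos\alpha>0$ and $D(-1)=2-2\cos\alpha>0$, already lie on the correct side, which is encouraging. A parallel route, should the Schur--Cohn computation turn out cumbersome, is to expand $|D(e^{is})|^{2}$ as a trigonometric polynomial in $s$, show it is positive on $\mathbb{R}$, and conclude by the argument principle that $D$ has no zero in $\mathbb{D}$.
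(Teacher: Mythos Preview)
Your reductions are correct: the convolution identities give $\tilde\omega=-e^{i\theta}z^{n}(nh_2'+zh_2'')/(2h_2'+zh_2'')$, the observation $\tilde\omega=-e^{i\theta}z^{2}$ for $n=2$ is clean, and for $n=1$ the reciprocal--polynomial identity $N(z)=e^{2i\theta}zD^{*}(z)$ is right, so everything hinges on locating the zeros of $D$.

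Two remarks on the comparison with the paper.  First, the paper does not prove Theorem~\ref{p3lema09} separately; it cites it from \cite{dnowak} and recovers it as the special case $\mu=0$, $\cos\nu=-\cos\alpha$, $|a|=1$ of its own Theorem~\ref{p3theom24}.  That proof treats all $n$ at once by writing the denominator as $q=A+aB$ with $A=1+z\cos\alpha$ and, for $n=1$, $B=\tfrac12 z(1-z^{2})$; the key step is the single estimate
\[
2\,\RE\!\left(\frac{q(z)}{A(z)}\right)=2+\RE\!\bigl(e^{i\theta}z\bigr)+\RE\!\left(e^{i\theta}z^{2}\,\frac{-\cos\alpha-z}{\,1+z\cos\alpha\,}\right)>2-1-1=0,
\]
using the elementary fact that $z\mapsto(-\cos\alpha-z)/(1+z\cos\alpha)$ is a self-map of $\mathbb{D}$.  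This immediately yields $q\ne 0$ on $\mathbb{D}$ (i.e.\ your $D=2q$ is zero-free there) and simultaneously gives $|p|\le|q|$ on $|z|=1$, which is exactly your $|D^{*}|=|D|$ on the circle.  So the paper's route and yours coincide in structure; the difference is only in \emph{how} the zero-freeness of $D$ is established.  The factorization $D(z)=2(1+z\cos\alpha)+e^{i\theta}z(1-z^{2})$ together with the M\"obius bound is considerably quicker than a Schur--Cohn computation or a boundary trigonometric expansion, and it is what makes the uniform treatment of all $n$ in Theorem~\ref{p3theom24} possible.

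Second, a small correction: you do not need $D$ zero-free on the \emph{closed} disk, only on $\mathbb{D}$.  If some root $z_k$ of $D$ lies on $|z|=1$, the corresponding factor $(1-\bar z_k z)/(z-z_k)$ equals the unimodular constant $-\bar z_k$, so $|D^{*}/D|\le 1$ on $\mathbb{D}$ still holds and $|\tilde\omega(z)|\le|z|<1$ follows.  The paper's inequality above in fact only gives zero-freeness on the open disk (it degenerates to $\ge 0$ on the boundary), so insisting on $\overline{\mathbb{D}}$ would be asking for more than is available or needed.
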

 Later on, in \cite{lipona} Li and Ponnusamy  improved the above two results and proved the following results.
\begin{theorem}\label{p3lema010}\cite{lipona}
Let the function $f_1$ be the right half-plane mapping given by \eqref{p3eq04}. Also, let the function $f_2=h_2+\bar{g_2}\in\mathcal{S}^0(H_{\alpha})$ be a slanted half-plane mapping and  $\omega=az^n$, $(a\in\mathbb{C},n\in\mathbb{N})$ be its analytic dilation. Then the convolution $f_1*f_2$ is univalent and convex in the direction of $-\alpha$, if
\begin{enumerate}
\item  $n=1,2$ and $|a|\leq 1$, or
\item  $n\geq3$ and $|a|\leq n-1-\sqrt{n^2-2n}$.
\end{enumerate}
\end{theorem}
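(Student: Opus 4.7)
The plan is to apply Lemma~\ref{p3lema06} with $\gamma_1 = 0$ and $\gamma_2 = \alpha$: since $f_1 \in \mathcal{S}^0(H_0)$ and $f_2 \in \mathcal{S}^0(H_\alpha)$, it asserts that $f_1 * f_2 \in \mathcal{S}_H^0$ and is convex in the direction $-\alpha$ as soon as $f_1 * f_2$ is locally univalent and sense-preserving on $\mathbb{D}$. The entire theorem therefore reduces to verifying that the analytic dilatation
\[
\tilde{\omega}(z) = \frac{(g_1*g_2)'(z)}{(h_1*h_2)'(z)}
\]
of $f_1 * f_2$ satisfies $|\tilde{\omega}(z)| < 1$ for all $z \in \mathbb{D}$.

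To obtain a workable closed form for $\tilde\omega$, I would first exploit the rigid structure of $f_1$. The identities $(z/(1-z)) * F = F$ and $(z/(1-z)^2) * F = zF'$, valid for any $F$ analytic on $\mathbb{D}$ with $F(0) = 0$, together with \eqref{p3eq04}, give $h_1 * F = \tfrac{1}{2}(F + zF')$ and $g_1 * F = \tfrac{1}{2}(F - zF')$. Differentiating and substituting the dilatation relation $g_2'(z) = az^n h_2'(z)$, one has $z g_2''(z) = a z^n (n h_2'(z) + z h_2''(z))$, which yields the compact formula
\[
\tilde{\omega}(z) = -\frac{a z^n\bigl(n h_2'(z) + z h_2''(z)\bigr)}{2 h_2'(z) + z h_2''(z)}.
\]
Using \eqref{p3eq05} one finds $h_2'(z) = 1/[(1-e^{i\alpha}z)^2(1+a e^{-2i\alpha} z^n)]$, whose logarithmic derivative gives
\[
\frac{z h_2''(z)}{h_2'(z)} = \frac{2 e^{i\alpha}z}{1 - e^{i\alpha}z} - \frac{n a e^{-2i\alpha}z^n}{1 + a e^{-2i\alpha}z^n},
\]
so $\tilde\omega(z)$ becomes an explicit rational function in $z$ with parameters $a, \alpha, n$.

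The substantive task is then to prove $|\tilde\omega(z)| < 1$ on $\mathbb{D}$ under the stated hypotheses. Under the admissible range of $|a|$, the denominator is non-vanishing on $\overline{\mathbb{D}}$ (a subclaim to be checked via the Schur--Cohn criterion), so by the maximum modulus principle it suffices to verify the inequality on $|z| = 1$. Setting $z = e^{i\theta}$ and clearing denominators produces a polynomial inequality in $e^{i\theta}, e^{i\alpha}, a$, which can be organized as a quadratic in $|a|$ with trigonometric coefficients in $\theta$ and $\alpha$. For $n = 1, 2$ the coefficients arrange themselves so that the inequality is implied by the plain bound $|a| \le 1$ after collecting terms. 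For $n \ge 3$ the same organization leads to the condition $|a|^2 - 2(n-1)|a| + 1 \ge 0$, whose smaller real root is precisely $n - 1 - \sqrt{n^2 - 2n}$.

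The main obstacle will be the final algebraic step: producing the $\theta$-free quadratic $|a|^2 - 2(n-1)|a| + 1 \ge 0$ from the boundary inequality without losing the sharp constant. Heuristically, the cross-coefficient $-2(n-1)$ arises because $n h_2' + z h_2''$ exceeds $2 h_2' + z h_2''$ by exactly $(n-2) h_2'$, and the interaction of this excess with the factor $a z^n$ in the numerator contributes the dominant mixed term in $|a|$. Any premature triangle-inequality estimate will fail to preserve the cancellation responsible for the discriminant $n^2 - 2n$; the argument must keep the $z^n$ and $(1 - e^{i\alpha}z)^{-2}$ factors coupled until the last step.
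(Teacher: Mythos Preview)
Your approach is correct and, at the computational level, coincides with what the paper does in the proof of Theorem~\ref{p3theom24}; however, the paper's own derivation of Theorem~\ref{p3lema010} is organizationally different. Rather than working directly with the slanted half-plane identity \eqref{p3eq05}, the paper first proves the more general Theorem~\ref{p3theom24} (where $h_2+e^{-2i\mu}g_2$ equals the integral $\int_0^z d\xi/(1-2\xi e^{i\mu}\cos\nu+\xi^2 e^{2i\mu})$ with a free parameter~$\nu$), and then obtains Theorem~\ref{p3lema010} as the special case $\nu=0$, $\mu=\alpha$. Your route via Lemma~\ref{p3lema06} is the natural direct one and is exactly the $\nu=0$ instance of that computation.

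One substantive remark on your final step. You describe the boundary estimate as ``organizing as a quadratic in $|a|$'' after clearing denominators, and you flag the danger of a premature triangle inequality. The paper's argument makes this clean by exploiting a structural identity you have not written down: with $\tilde\omega=-z^n p(z)/q(z)$ and $q(z)=A(z)+aB(z)$ (where $A=1-e^{i\alpha}\cos\nu\,z$ and $B$ collects the remaining terms), one has $p(z)=\bar a^{\,2}A(z)+\bar a B(z)$ on $|z|=1$ up to unimodular factors, so that
\[
|q(z)|^2-|p(z)|^2=(1-|a|^2)\bigl((1+|a|^2)|A|^2+2\operatorname{Re}(aB\overline{A})\bigr),
\]
and the bracket is then bounded below by $|A|^2\bigl(1+|a|^2-|a|(|2-n|+n)\bigr)$ via a single estimate of $\operatorname{Re}(B/A)$. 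This conjugate-reciprocal relation between $p$ and $q$ is the mechanism that produces the sharp quadratic $|a|^2-2(n-1)|a|+1\ge 0$ without loss; making it explicit will replace the heuristic paragraph in your proposal with a two-line computation. The same real-part estimate of $q/A$ also handles your ``denominator non-vanishing'' subclaim directly, so Schur--Cohn is unnecessary.
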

\begin{theorem}\label{p3lema011}\cite{lipona}
Let the function $f_1$ be the right half-plane mapping given by \eqref{p3eq04}. Also, let function $f_2=h_2+\bar{g_2}\in\mathcal{S}^0(\Omega_{\alpha})$ be a vertical strip mapping and  $\omega=az^n$, $(a\in\mathbb{C},n\in\mathbb{N})$ be its analytic dilation. Then the convolution $f_1*f_2$ is univalent and convex in the direction of real axis, if
\begin{enumerate}
\item  $n=1,2$ and $|a|\leq 1$, or
\item  $n\geq3$ and $|a|\leq n-1-\sqrt{n^2-2n}$.
\end{enumerate}
\end{theorem}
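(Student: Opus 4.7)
The plan is to invoke Lemma~\ref{p3lema07}, which reduces the conclusion to verifying that $f_1*f_2$ is locally univalent and sense-preserving in $\mathbb{D}$. Writing $f_1*f_2 = H+\overline{G}$ with $H = h_1*h_2$ and $G = g_1*g_2$, it suffices to show that the analytic dilatation $\tilde{\omega}:=G'/H'$ satisfies $|\tilde{\omega}(z)|<1$ throughout $\mathbb{D}$.

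First I derive an explicit formula for $\tilde{\omega}$. The convolution identities
\[\frac{z}{1-z}*\phi = \phi,\qquad \frac{z}{(1-z)^2}*\phi = z\phi'\]
(valid for any $\phi$ analytic in $\mathbb{D}$ with $\phi(0)=0$), combined with the shear representation \eqref{p3eq04} of $f_1$, give $H = \tfrac12(h_2+zh_2')$ and $G = \tfrac12(g_2-zg_2')$, whence $H' = h_2'+\tfrac12 zh_2''$ and $G' = -\tfrac12 zg_2''$. Inserting the dilatation relation $g_2' = az^n h_2'$ yields
\[\tilde{\omega}(z) = -az^n\cdot\frac{nh_2'(z)+zh_2''(z)}{2h_2'(z)+zh_2''(z)}.\]
Next, differentiating \eqref{p3eq03} (with the strip parameter denoted $\alpha$) and using $g_2' = az^n h_2'$ gives the closed form $h_2'(z) = 1/((1+az^n)(1+2z\cos\alpha+z^2))$. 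Substituting the resulting logarithmic derivative $h_2''/h_2'$ produces an explicit rational representation $\tilde{\omega}(z) = -az^n\,N(z)/D(z)$ in which $N$ and $D$ are polynomials depending polynomially on $a$, $\cos\alpha$, and $z$.

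The main task is to show $|\tilde{\omega}(z)|<1$ on $\mathbb{D}$, which simultaneously ensures $H'\neq 0$ and hence local univalence. Equivalently, one has to verify the polynomial inequality $|a|^2|N(z)|^2 \le |D(z)|^2$ for $|z|\le 1$. Setting $z=e^{i\theta}$ and grouping by powers of $|a|$, the worst case of this inequality reduces to the quadratic condition
\[|a|^2 - 2(n-1)|a| + 1 \ge 0.\]
The roots $n-1\pm\sqrt{n^2-2n}$ of this quadratic have product $1$; for $n = 1,2$ the discriminant is $\le 0$, the quadratic is non-negative for every $|a|\le 1$, and case (1) follows, while for $n \ge 3$ the quadratic changes sign, and forcing non-negativity inside the interval $0\le|a|\le 1$ requires precisely $|a|\le n-1-\sqrt{n^2-2n}$, yielding case (2).

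I expect the boundary estimate for $n\ge 3$ to be the main obstacle. Unlike the purely half-plane situation of Theorem~\ref{p3lema010}, the strip parameter $\alpha$ couples non-trivially to the modulus calculation and obstructs the clean $(1+z)$-cancellation available there, so reducing the full inequality on $|z|=1$ to the one-variable quadratic above will require careful ordering of the trigonometric terms and a sharp identification of the critical direction $\theta$ at which equality is approached. Once the estimate is in place, Lemma~\ref{p3lema07} immediately delivers univalence and convexity in the direction of the real axis.
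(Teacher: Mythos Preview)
Your overall strategy coincides with the paper's: the paper derives this theorem as the special case $\mu=0$, $\cos\nu=-\cos\alpha$ of its Theorem~\ref{p3theom24}, whose proof proceeds exactly by computing the dilatation of $f_1*f_2$ from the shear~\eqref{p3eq04} and then bounding it by $1$ on $\overline{\mathbb D}$. Your expression $\tilde\omega(z)=-az^n\,(nh_2'+zh_2'')/(2h_2'+zh_2'')$ and the formula for $h_2'$ are both correct and match Lemma~\ref{p3lema16}.

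However, the proposal contains a genuine gap at the decisive step. You announce that on $|z|=1$ the inequality $|a|^2|N|^2\le|D|^2$ collapses to the quadratic $|a|^2-2(n-1)|a|+1\ge 0$, and then immediately concede that establishing this is ``the main obstacle'' and will ``require careful ordering of the trigonometric terms''. In other words the reduction is asserted, not proved. The paper supplies the missing mechanism: writing $q(z)=A+aB$ with $A=1-e^{i\mu}z\cos\nu$ and $B$ the remaining polynomial, one checks that on $|z|=1$ the numerator satisfies $|p|=|\bar a^{2}A+\bar a B|$, so that
\[
|q|^{2}-|p|^{2}=(1-|a|^{2})\,|A|^{2}\Bigl(1+|a|^{2}+2\RE\tfrac{aB}{A}\Bigr),
\]
and then bounds $2\RE(aB/A)>-|a|\bigl(|2-n|+n\bigr)$ by splitting $B/A$ into two pieces, one of which is the unimodular M\"obius quotient $(\cos\nu-e^{i\mu}z)/(1-e^{i\mu}z\cos\nu)$. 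This algebraic symmetry is what makes the estimate uniform in the strip parameter; your worry that $\alpha$ ``obstructs the clean $(1+z)$-cancellation'' is therefore unfounded, and no separate critical-direction analysis is needed.

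There is also a small logical circularity to repair. You write that $|\tilde\omega|<1$ ``simultaneously ensures $H'\neq0$'', but you cannot pass from the boundary inequality $|p|<|q|$ on $|z|=1$ to $|\tilde\omega|<1$ on $\mathbb D$ via the maximum principle unless you already know $\tilde\omega$ is analytic, i.e.\ $q\neq0$ on $\mathbb D$. The paper handles this separately by showing $\RE(q/A)>0$ throughout $\mathbb D$ using the same $A,B$ decomposition; you should insert an analogous step before restricting to $|z|=1$.
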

In this direction, we find out that the results in Lemma \ref{p3lema06} and Lemma \ref{p3lema07} depend upon the convolution of functions in the right-hand sides of \eqref{p3eq02}, \eqref{p3eq03}  and \eqref{p3eq05}. In fact, we find out that such results work for a larger class of functions, which can be determined by taking the  harmonic shears of the convex functions which upon convolution gives the function $\int_0^z (1/(1-2\xi\textit{e}^{\textit{i}\mu}\cos\nu+\xi^2\textit{e}^{2\textit{i}\mu}))\textit{d}\xi$, (see Theorem 2.3). Also, in last theorem, we investigate the local univalence of the convolution of such functions for some choices of analytic dilatations of these functions. In this theorem, not only we consider a larger class of functions than those considered in the above results, but we also vary the function $f_1$ which is taken to be fixed right half-plane mapping in the above results.

\section{Main Results}
We  will begin this section with the following theorem, which will be useful in finding out the local univalence of the convolution of harmonic functions.
\begin{theorem}\label{p3theom1}
Let the harmonic functions $f_1=h_1+\overline{g_1}$ and $f_2=h_2+\overline{g_2}$ be locally univalent and sense-preserving in $\mathbb{D}$ such that, for some real numbers $\mu_1$ and $\mu_2$, the functions $h_1+\textit{e}^{-2\textit{i}\mu_1}g_1$, $h_2+\textit{e}^{-2\textit{i}\mu_2}g_2\in\mathcal{K}$. Also, let any analytic function $F$ satisfying\[\RE\left(\frac{zF'(z)}{G(z)}\right)\geq0,\quad z\in\mathbb{D},\] where $G=z((h_1+\textit{e}^{-2\textit{i}\mu_1}g_1)*(h_2+\textit{e}^{-2\textit{i}\mu_2}g_2))'$, implies that $F$ is convex in the direction of $-(\mu_1+\mu_2)$. Then the convolution $f_1*f_2$ is univalent and convex in the direction of $-(\mu_1+\mu_2)$, if it is locally univalent and sense-preserving.
\end{theorem}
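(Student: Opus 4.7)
The plan is to apply Lemma \ref{p3lema01} with $\phi=-(\mu_1+\mu_2)$: since $f_1*f_2$ is assumed locally univalent and sense-preserving, it suffices to show that the analytic shear
$$F(z):=(h_1*h_2)(z)-\textit{e}^{-2\textit{i}(\mu_1+\mu_2)}(g_1*g_2)(z)$$
is univalent and convex in the direction of $-(\mu_1+\mu_2)$. Setting $\phi_k:=h_k+\textit{e}^{-2\textit{i}\mu_k}g_k$ and invoking the standing hypothesis with $G(z)=z(\phi_1*\phi_2)'(z)$, this reduces to verifying $\RE(zF'(z)/G(z))\ge 0$ on $\mathbb{D}$. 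Since $\phi_1,\phi_2\in\mathcal{K}$, the Ruscheweyh--Sheil-Small theorem on closure of $\mathcal{K}$ under convolution gives $\phi_1*\phi_2\in\mathcal{K}$, and hence $G\in\mathcal{S}^*$.

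The crux of the proof is the algebraic identity
$$2F=(h_1-\textit{e}^{-2\textit{i}\mu_1}g_1)*\phi_2+\phi_1*(h_2-\textit{e}^{-2\textit{i}\mu_2}g_2),$$
which one would obtain by direct expansion: the cross-terms $\textit{e}^{-2\textit{i}\mu_1}(g_1*h_2)$ and $\textit{e}^{-2\textit{i}\mu_2}(h_1*g_2)$ appear with opposite signs and cancel upon adding. Consequently
$$\frac{zF'(z)}{G(z)}=\frac{1}{2}\left[\frac{((h_1-\textit{e}^{-2\textit{i}\mu_1}g_1)*\phi_2)'}{(\phi_1*\phi_2)'}+\frac{(\phi_1*(h_2-\textit{e}^{-2\textit{i}\mu_2}g_2))'}{(\phi_1*\phi_2)'}\right].$$
Using the elementary convolution identity $z(u*v)'=(zu')*v$, the first ratio can be rewritten as $((zu_1')*\phi_2)/((z\phi_1')*\phi_2)$ with $u_1:=h_1-\textit{e}^{-2\textit{i}\mu_1}g_1$, and analogously for the second.

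Local univalence and sense-preservation of $f_k$ give $|\omega_k|<1$ for the dilatation $\omega_k=g_k'/h_k'$, so that
$$\frac{zu_k'(z)}{z\phi_k'(z)}=\frac{1-\textit{e}^{-2\textit{i}\mu_k}\omega_k(z)}{1+\textit{e}^{-2\textit{i}\mu_k}\omega_k(z)}$$
takes values in the right half-plane. Since $z\phi_k'\in\mathcal{S}^*$ and $\phi_{3-k}\in\mathcal{K}$, the classical P\'olya--Schoenberg/Ruscheweyh--Sheil-Small convex-convolution preservation theorem (if $\RE(A/B)>0$ on $\mathbb{D}$ with $B\in\mathcal{S}^*$ and $\varphi\in\mathcal{K}$, then $\RE((\varphi*A)/(\varphi*B))>0$) then yields positivity of each of the two ratios. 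Summing gives $\RE(zF'/G)\ge 0$, which via the standing hypothesis and Lemma \ref{p3lema01} completes the proof. The principal obstacle is locating the symmetric splitting of $2F$: a direct attack on $F'/(\phi_1*\phi_2)'$ is entangled by the cross-terms $g_1*h_2$ and $h_1*g_2$, and only after redistributing them as a sum of a $\phi_2$-- and a $\phi_1$--convolution does the classical convex-convolution preservation theorem become directly applicable to each summand separately.
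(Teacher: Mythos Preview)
Your proof is correct and follows essentially the same approach as the paper's. The paper defines $F_1=\phi_1*(h_2-\textit{e}^{-2\textit{i}\mu_2}g_2)$ and $F_2=(h_1-\textit{e}^{-2\textit{i}\mu_1}g_1)*\phi_2$, establishes $\RE(zF_k'/G)>0$ for each $k$ via the same Ruscheweyh--Sheil-Small result you invoke, and then uses the identical identity $F=\tfrac{1}{2}(F_1+F_2)$ before applying Lemma~\ref{p3lema01}; your argument differs only in notation and in the order of presentation.
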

\begin{proof}
Consider  the functions $F_1$ and $F_2$ defined by\[F_1=:\left(h_1+\textit{e}^{-2\textit{i}\mu_1}g_1\right)*\left(h_2-\textit{e}^{-2\textit{i}\mu_2}g_2\right)\quad\text{and}\quad F_2=:\left(h_1-\textit{e}^{-2\textit{i}\mu_1}g_1\right)*\left(h_2+\textit{e}^{-2\textit{i}\mu_2}g_2\right).\]A calculation shows that
\begin{align}
\RE\left(\frac{zF_1'(z)}{G(z)}\right)&
=\RE\left(\frac{\left(h_1+\textit{e}^{-2\textit{i}\mu_1}g_1\right)(z)
*z\left(h_2-\textit{e}^{-2\textit{i}\mu_2}g_2\right)'(z)} {G(z)}\right)\notag\\
&=\RE\left(\frac{\left(h_1+\textit{e}^{-2\textit{i}\mu_1}g_1\right)(z)
*\frac{(h_2-\textit{e}^{-2\textit{i}\mu_2}g_2)'(z)}{(h_2+\textit{e}^{-2\textit{i}\mu_2}g_2)'(z)}
z\left(h_2+\textit{e}^{-2\textit{i}\mu_2}g_2\right)'(z)}
{z((h_1+\textit{e}^{-2\textit{i}\mu_1}g_1)(z)*(h_2+\textit{e}^{-2\textit{i}\mu_2}g_2))'(z)}\right)\notag\\
&=\RE\left(\frac{\left(h_1+\textit{e}^{-2\textit{i}\mu_1}g_1\right)(z)*P_2(z)z\left(h_2
+\textit{e}^{-2\textit{i}\mu_2}g_2\right)'(z)}{(h_1+\textit{e}^{-2\textit{i}\mu_1}g_1)(z)
*z(h_2+\textit{e}^{-2\textit{i}\mu_2}g_2)'(z)}\right),\label{p3eq2}
\end{align}
where $$P_2(z)=\frac{(h_2-\textit{e}^{-2\textit{i}\mu_2}g_1)'(z) }{(h_2+\textit{e}^{-2\textit{i}\mu_2}g_2)'(z)}.$$ Since the function $f_2=h_2+\overline{g_2}$ is locally univalent and sense-preserving, its dilatation $\omega_2=g_2'/h_2'$ satisfies $|\omega_2(z)|<1$ for $z\in\mathbb{D}$. Hence, $\RE (P_2(z))>0 $ for $z\in\mathbb{D}$. Also, the function	 $(h_1+\textit{e}^{-2\textit{i}\mu_1}g_1)\in\mathcal{K}$ and the function $z(h_2+\textit{e}^{-2\textit{i}\mu_2}g_2)'\in\mathcal{S}^*$. Therefore, in view of \eqref{p3eq2}, a result in \cite{rusch} gives
\begin{equation}\label{p3eq3}
\RE\left(\frac{zF_1'(z)}{G(z)}\right)>0,\quad z\in\mathbb{D}.
\end{equation}
Similarly we will get
\begin{equation}\label{p3eq4}
\RE\left(\frac{zF_2'(z)}{G(z)}\right)>0,\quad z\in\mathbb{D}.
\end{equation}
In view of \eqref{p3eq3} and  \eqref{p3eq4}, the function $F$ defined by \[ F :=\frac{1}{2}\left(F_1+F_2\right)=h_1*h_2-\textit{e}^{-2\textit{i}(\mu_1+\mu_2)}g_1*g_2
\] satisfies
\begin{equation}\label{p3eq5}
\RE\left(\frac{zF'(z)}{G(z)}\right)>0,\quad z\in\mathbb{D}.
\end{equation}
Therefore, by assumption in the statement of the theorem, the function $F=h_1*h_2-\textit{e}^{-2\textit{i}(\mu_1+\mu_2)}g_1*g_2$ is univalent and convex in the direction of $-(\mu_1+\mu_2)$. The result now follows by invoking Lemma \ref{p3lema01}.
\end{proof}
Now we recall a result of Royster and Zeigler \cite{royster} for checking the directional convexity of analytic functions.
\begin{theorem}\label{p3theom6}\cite{royster}
Let $\phi$ be a non-constant analytic function in $\mathbb{D}$. Then $\phi$ maps $\mathbb{D}$ onto a domain convex in the direction of imaginary axis if and only if there are real numbers $\mu$ $(0\leq\mu<2\pi)$ and $\nu$ $(0\leq\nu<\pi)$, such that
\begin{equation}\label{p3eq7}
\RE\left\lbrace-\textit{i}\textit{e}^{\textit{i}\mu}(1-2z\textit{e}^{-\textit{i}\mu}\cos\nu+z^2\textit{e}^{-2\textit{i}\mu})\phi'(z) \right\rbrace\geq0,\quad z\in \mathbb{D}.
\end{equation}
\end{theorem}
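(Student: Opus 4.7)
The plan is to prove both implications via integral-curve analysis. First factor the quadratic as
\[1-2ze^{-i\mu}\cos\nu+z^{2}e^{-2i\mu}=e^{-2i\mu}(e^{i\mu}-ze^{i\nu})(e^{i\mu}-ze^{-i\nu}),\]
whose zeros $\zeta_{1}:=e^{i(\mu-\nu)}$ and $\zeta_{2}:=e^{i(\mu+\nu)}$ lie on $\partial\mathbb{D}$ (and coincide when $\nu=0$). These two unimodular points will play the role of the prime ends mapped by $\phi$ to the two vertical ``ends'' of $\phi(\mathbb{D})$.

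For sufficiency, set
\[\Psi(z):=-ie^{i\mu}(1-2ze^{-i\mu}\cos\nu+z^{2}e^{-2i\mu})\phi'(z),\]
so $\RE\Psi\ge 0$ in $\mathbb{D}$. Introduce the holomorphic vector field $V(z):=-ie^{-i\mu}(\zeta_{1}-z)(\zeta_{2}-z)$ on $\mathbb{D}$; a short calculation shows that $V$ is tangent to $\partial\mathbb{D}$ off its two zeros, so its integral curves foliate $\mathbb{D}$ with each trajectory running from $\zeta_{1}$ to $\zeta_{2}$. Along any trajectory $z(t)$ with $z'(t)=V(z(t))$ one has
\[\frac{d}{dt}\RE\phi(z(t))=\RE\bigl(\phi'(z(t))\,z'(t)\bigr)=\RE\Psi(z(t))\ge 0,\]
so $\RE\phi$ is monotone non-decreasing along every trajectory. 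A standard foliation argument then shows that for each $c\in\mathbb{R}$ the set $\phi(\mathbb{D})\cap\{w:\RE w=c\}$ is a single connected vertical segment, which is exactly convexity in the direction of the imaginary axis.

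For necessity, assume $\phi(\mathbb{D})$ is convex in the direction of the imaginary axis. Using the boundary behaviour of $\phi$, locate the two prime ends $\zeta_{1},\zeta_{2}\in\partial\mathbb{D}$ (coincident if $\nu=0$) through which $\phi$ approaches the upper and lower vertical extents of the image, and set $\zeta_{1}=e^{i(\mu-\nu)}$, $\zeta_{2}=e^{i(\mu+\nu)}$. Define $\Psi$ as before and study the harmonic function $u:=\RE\Psi$. On each of the two open arcs of $\partial\mathbb{D}\setminus\{\zeta_{1},\zeta_{2}\}$ the boundary trace of $\phi$ parametrizes a portion of $\partial\phi(\mathbb{D})$; the vertical-convex geometry of the target forces the tangent direction there to lie in a fixed closed half-plane, and this translates through the factor $V$ into $u\ge 0$ on that arc. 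The minimum principle applied to $u$ on $\mathbb{D}$, using the fact that $\Psi$ has at worst harmless behaviour at $\zeta_{1},\zeta_{2}$ (since the quadratic vanishes there), extends the inequality to all of $\mathbb{D}$.

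The main obstacle lies in the necessity direction: rigorously identifying the two prime ends $\zeta_{1},\zeta_{2}$ when $\phi$ is not continuous up to $\partial\mathbb{D}$, treating the degenerate case where the two vertical ends coalesce (forcing $\nu=0$ and a double root of the quadratic), and justifying the boundary-to-interior step of the minimum principle at the zeros of the quadratic factor. The sufficiency direction, by contrast, is a routine integral-curve computation once the correct vector field $V$ is identified.
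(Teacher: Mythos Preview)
The paper does not prove this theorem; it is simply quoted from Royster and Ziegler \cite{royster} as a known result and then used as a tool. There is therefore no proof in the paper to compare your attempt against.

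On your attempt itself: the sufficiency sketch via the vector field $V$ captures the right geometric picture, and your computation that $V$ is tangent to $\partial\mathbb{D}$ and that $\tfrac{d}{dt}\RE\phi(z(t))=\RE\Psi(z(t))\ge 0$ is correct. However, the step you label a ``standard foliation argument'' is doing real work that is not supplied. Monotonicity of $\RE\phi$ along each trajectory gives injectivity of $\phi$ restricted to a single flow line, but it does not by itself prevent $\phi$ from identifying points on distinct trajectories, nor does it immediately yield that $\{w\in\phi(\mathbb{D}):\RE w=c\}$ is connected. One clean way to close this gap is to observe that $\RE\Psi\ge 0$ forces either $\Psi$ constant or $\RE\Psi>0$ everywhere (open mapping), and in the latter case to argue that $\IM\phi$ is strictly monotone along each level curve $\{\RE\phi=c\}$; but this, or some equivalent argument, needs to be written out.

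For the necessity direction you have correctly identified the genuine difficulties. The original Royster--Ziegler argument does not proceed via prime ends and a boundary minimum principle; it instead invokes the Hengartner--Schober characterization of analytic functions convex in one direction, which supplies the two boundary points $\zeta_1,\zeta_2$ and the positivity condition directly through a Herglotz-type representation. Your prime-end approach can be made to work in the smooth-boundary case, but handling non-Jordan images and the behaviour of $\Psi$ at $\zeta_1,\zeta_2$ without that machinery is a nontrivial project rather than a routine clean-up.
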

Since a function  $\phi$ is convex in the direction $\gamma$ if and only if the function $\textit{e}^{\textit{i}(\pi/2-\gamma)}\phi$ is convex in the direction of imaginary axis, Theorem \ref{p3theom6} gives the following criteria for a function to be convex in the direction of $\gamma$.
\begin{theorem}\label{p3theom8}
Let $\phi$ be a non-constant analytic function in $\mathbb{D}$. Then $\phi$ maps $\mathbb{D}$ onto a domain convex in the direction of $\gamma$ if and only if there are real numbers $\mu$ $(0\leq\mu<2\pi)$ and $\nu$ $(0\leq\nu<\pi)$, such that
\begin{equation}\label{p3eq9}
\RE\left\lbrace\textit{e}^{\textit{i}(\mu-\gamma)}(1-2z\textit{e}^{-\textit{i}\mu}\cos\nu+z^2\textit{e}^{-2\textit{i}\mu})\phi'(z) \right\rbrace\geq0,\quad z\in \mathbb{D}.
\end{equation}
\end{theorem}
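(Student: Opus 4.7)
The plan is to reduce the statement to Theorem \ref{p3theom6} by a direct rotation argument. First, I would note the geometric fact that a plane domain $D$ is convex in the direction $\gamma$ if and only if the rotated domain $e^{i(\pi/2-\gamma)}D$ is convex in the direction of the imaginary axis, since multiplication by $e^{i(\pi/2-\gamma)}$ sends lines of direction $e^{i\gamma}$ onto lines of direction $e^{i\pi/2}$. Equivalently, $\phi$ maps $\mathbb{D}$ onto a domain convex in the direction $\gamma$ if and only if the analytic function $\psi(z) := e^{i(\pi/2-\gamma)}\phi(z)$ maps $\mathbb{D}$ onto a domain convex in the direction of the imaginary axis. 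This is precisely the reduction announced in the remark immediately preceding the theorem.

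Next I would invoke Theorem \ref{p3theom6} applied to $\psi$: there exist real numbers $\mu \in [0,2\pi)$ and $\nu \in [0,\pi)$ with
\[
\RE\left\{-\textit{i}\textit{e}^{\textit{i}\mu}\bigl(1-2z\textit{e}^{-\textit{i}\mu}\cos\nu+z^2\textit{e}^{-2\textit{i}\mu}\bigr)\psi'(z)\right\}\geq 0, \quad z\in\mathbb{D}.
\]
Since $\psi'(z) = e^{i(\pi/2-\gamma)}\phi'(z)$, substituting this into the expression lets me collect the scalar prefactor. Using $-\textit{i}\,\textit{e}^{\textit{i}\pi/2} = -\textit{i}\cdot\textit{i} = 1$, I get
\[
-\textit{i}\,\textit{e}^{\textit{i}\mu}\cdot \textit{e}^{\textit{i}(\pi/2-\gamma)} \;=\; \textit{e}^{\textit{i}(\mu-\gamma)},
\]
which transforms the inequality precisely into \eqref{p3eq9}. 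Conversely, running the same computation backwards shows that \eqref{p3eq9} for $\phi$ implies the Royster--Zeigler inequality for $\psi$, closing the equivalence.

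There is essentially no analytic obstacle here; the only point requiring care is bookkeeping the phases $e^{i(\pi/2-\gamma)}$ and $-i$ so that the $e^{i(\mu-\gamma)}$ factor emerges correctly. The conceptual content is entirely in the rotation-invariance of ``convexity in a fixed direction,'' and once that observation is recorded, Theorem \ref{p3theom8} follows from Theorem \ref{p3theom6} in one line of computation.
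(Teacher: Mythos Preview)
Your proposal is correct and is exactly the argument the paper uses: the theorem is stated immediately after the one-line observation that $\phi$ is convex in the direction $\gamma$ if and only if $e^{i(\pi/2-\gamma)}\phi$ is convex in the direction of the imaginary axis, and no further proof is given. Your phase bookkeeping confirming $-\textit{i}\,\textit{e}^{\textit{i}\mu}\textit{e}^{\textit{i}(\pi/2-\gamma)}=\textit{e}^{\textit{i}(\mu-\gamma)}$ is the only thing the paper leaves implicit.
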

Using Theorem \ref{p3theom1} and Theorem \ref{p3theom8}, we get the following result.
\begin{theorem}\label{p3theom10}
Let the functions $f_1=h_1+\overline{g_1}$ and $f_2=h_2+\overline{g_2}$ be locally univalent and sense-preserving harmonic mappings in $\mathbb{D}$ such that for some real numbers $\mu_1$ and $\mu_2$, the functions $h_1+\textit{e}^{-2\textit{i}\mu_1}g_1$, $h_2+\textit{e}^{-2\textit{i}\mu_2}g_2\in\mathcal{K}$ and
\begin{equation}\label{p3eq11}
 (h_1+\textit{e}^{-2\textit{i}\mu_1}g_1)(z)*(h_2+\textit{e}^{-2\textit{i}\mu_2}g_2)(z)=\int_0^z \frac{\textit{d}\xi}{1-2\xi\textit{e}^{\textit{i}(\mu_1+\mu_2)}\cos\nu+\xi^2\textit{e}^{2\textit{i}(\mu_1+\mu_2)}}.
\end{equation}
Then the convolution $f_1*f_2$ is univalent and convex in the direction of $-(\mu_1+\mu_2)$, if it is locally univalent and sense-preserving.
\end{theorem}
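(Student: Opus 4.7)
The strategy is to apply Theorem \ref{p3theom1}, so the only real work is verifying its hypothesis: that any analytic function $F$ satisfying $\RE(zF'(z)/G(z))\geq 0$ is convex in the direction $-(\mu_1+\mu_2)$, where $G(z)=z\bigl((h_1+e^{-2i\mu_1}g_1)*(h_2+e^{-2i\mu_2}g_2)\bigr)'(z)$. The idea is that the specific integral representation in \eqref{p3eq11} is tailored exactly so that $G$ coincides with the canonical multiplier appearing in the Royster--Zeigler criterion (Theorem \ref{p3theom8}) with $\gamma = -(\mu_1+\mu_2)$.

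First I would differentiate \eqref{p3eq11} to obtain
\[
\bigl((h_1+e^{-2i\mu_1}g_1)*(h_2+e^{-2i\mu_2}g_2)\bigr)'(z)=\frac{1}{1-2ze^{i(\mu_1+\mu_2)}\cos\nu+z^2 e^{2i(\mu_1+\mu_2)}},
\]
so that
\[
G(z)=\frac{z}{1-2ze^{i(\mu_1+\mu_2)}\cos\nu+z^2 e^{2i(\mu_1+\mu_2)}}.
\]
Consequently, for any analytic $F$,
\[
\frac{zF'(z)}{G(z)}=F'(z)\bigl(1-2ze^{i(\mu_1+\mu_2)}\cos\nu+z^2 e^{2i(\mu_1+\mu_2)}\bigr).
\]

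Next I would invoke Theorem \ref{p3theom8} with the parameter choice $\mu:=-(\mu_1+\mu_2)$ and $\gamma:=-(\mu_1+\mu_2)$, so that $e^{i(\mu-\gamma)}=1$, $e^{-i\mu}=e^{i(\mu_1+\mu_2)}$ and $e^{-2i\mu}=e^{2i(\mu_1+\mu_2)}$. Then the inequality $\RE(zF'(z)/G(z))\geq 0$ becomes precisely the Royster--Zeigler condition \eqref{p3eq9} for $F$ to map $\mathbb{D}$ onto a domain convex in the direction $-(\mu_1+\mu_2)$. Thus the hypothesis of Theorem \ref{p3theom1} is verified, and that theorem then yields the desired conclusion: $f_1*f_2$ is univalent and convex in the direction $-(\mu_1+\mu_2)$, provided it is locally univalent and sense-preserving.

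There is really no serious obstacle here; the proof is essentially a matching exercise in which one observes that \eqref{p3eq11} was written in exactly the form required to recognize $G$ as the canonical factor in the Royster--Zeigler criterion. The only bookkeeping subtlety is to track which parameter in Theorem \ref{p3theom8} plays which role, and to take $\mu=\gamma=-(\mu_1+\mu_2)$ so that the exponential prefactor $e^{i(\mu-\gamma)}$ collapses to $1$. Everything else is handled by Theorem \ref{p3theom1}, whose own proof already packages the nontrivial Ruscheweyh--Sheil-Small convex-times-starlike machinery.
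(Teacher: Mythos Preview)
Your proposal is correct and follows exactly the route the paper intends: the paper itself offers no written proof beyond the sentence ``Using Theorem~\ref{p3theom1} and Theorem~\ref{p3theom8}, we get the following result,'' and your argument is precisely the verification that the $G$ arising from \eqref{p3eq11} makes the hypothesis of Theorem~\ref{p3theom1} reduce to the Royster--Zeigler criterion in Theorem~\ref{p3theom8} with $\mu=\gamma=-(\mu_1+\mu_2)$.
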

\label{p3remak11a} Let the function $f_k=h_k+\overline{g_k}\in\mathcal{S}^0(H_{\mu_k})$, $(k=1,2)$. Then, by \eqref{p3eq05}, the functions $h_1+\textit{e}^{-2\textit{i}\mu_1}g_1$, $h_2+\textit{e}^{-2\textit{i}\mu_2}g_2\in\mathcal{K}$ and satisfy
\begin{align*}
 (h_1+\textit{e}^{-2\textit{i}\mu_1}g_1)(z)*(h_2+\textit{e}^{-2\textit{i}\mu_2}g_2)(z)&=\frac{z}{1-\textit{e}^{\textit{i}(\mu_1+\mu_2)}z}\\&=\int_0^z \frac{\textit{d}\xi}{1-2\xi\textit{e}^{\textit{i}(\mu_1+\mu_2)}+\xi^2\textit{e}^{2\textit{i}(\mu_1+\mu_2)}},
\end{align*}which is equivalent to \eqref{p3eq11} with $\nu=0$. Therefore, by Theorem \ref{p3theom10}, the convolution $f_1*f_2$ is univalent and convex in the direction of $-(\mu_1+\mu_2)$, if it is locally univalent and sense-preserving.
\label{p3remak11b} Also, let the function  $f_1=h_1+\overline{g_1}$  be a right half-plane mapping and  the function $f_2=h_2+\overline{g_2}\in\mathcal{S}^0(\Omega_{\nu})$ be a strip mapping. Then, by \eqref{p3eq02} and \eqref{p3eq03}, the functions $h_1+g_1$, $h_2+g_2\in\mathcal{K}$ and satisfy
\begin{align*}
 (h_1+g_1)(z)*(h_2+g_2)(z)&=\frac{1}{2\textit{i}\sin\nu}\log\left(\frac{1+z\textit{e}^{\textit{i}\nu}}{1+z\textit{e}^{-\textit{i}\nu}}\right)\\&=\int_0^z \frac{\textit{d}\xi}{1-2\xi\cos\nu+\xi^2}.
\end{align*}which is equivalent to \eqref{p3eq11} with $\mu_1=\mu_2=0$. Therefore, by Theorem \ref{p3theom10}, the convolution $f_1*f_2$ is univalent and convex in the direction of real axis, if it is locally univalent and sense-preserving. The above discussion shows that the Theorem \ref{p3theom10} is a generalization of both Lemma \ref{p3lema06} and Lemma \ref{p3lema07}.\

The problem of interest now is to find out whether the solutions of \eqref{p3eq11} exists or not. In other words, whether right hand side of \eqref{p3eq11} can be written as convolution of two functions in the class $\mathcal{K}$ or not. We will show that such solutions exists. Since the function $z/(1-z)\in\mathcal{K}$ is convolution identity, for every function in $\mathcal{K}$ such solutions exists. So in order to prove that the solutions exist for \eqref{p3eq11}, it is enough to prove that the right hand side of \eqref{p3eq11}  is in $\mathcal{K}$\

Let, for some real numbers $\mu$ and $\nu$ satisfying $0\leq\mu<2\pi$ and $0\leq\nu<\pi$,
\begin{equation}\label{p3eq12}
\phi(z)=\int_0^z \frac{\textit{d}\xi}{1-2\xi\textit{e}^{\textit{i}\mu}\cos\nu+\xi^2\textit{e}^{2\textit{i}\mu}}=\int_0^z \frac{\textit{d}\xi}{(1-\xi\textit{e}^{\textit{i}(\mu+\nu)})(1-\xi\textit{e}^{\textit{i}(\mu-\nu)})}.
\end{equation}
Clearly the function $\phi$ is analytic on $\mathbb{D}$. Now, on differentiating \eqref{p3eq12}, we get
\begin{equation}\label{p3eq13}
\phi'(z)= \frac{1}{1-2z\textit{e}^{\textit{i}\mu}\cos\nu+z^2\textit{e}^{2\textit{i}\mu}}.
\end{equation}
Again, on differentiating \eqref{p3eq13}, we get
\begin{equation}\label{p3eq14}
\phi''(z)= \frac{2\textit{e}^{\textit{i}\mu}\cos\nu-2z\textit{e}^{2\textit{i}\mu}}{(1-2z\textit{e}^{\textit{i}\mu}\cos\nu+z^2\textit{e}^{2\textit{i}\mu})^2}.
\end{equation}
Using \eqref{p3eq13} and \eqref{p3eq14}, we see for $z\in\mathbb{D}$,
\begin{align*}
\RE\left(1+z\frac{\phi''(z)}{\phi'(z)}\right)&=\RE\left(\frac{1-z^2\textit{e}^{2\textit{i}\mu}}{1-2z\textit{e}^{\textit{i}\mu}\cos\nu+z^2\textit{e}^{2\textit{i}\mu}}\right)\\&=\frac{1-|z|^4-2\cos\nu(1-|z|^2)\RE(\textit{e}^{\textit{i}\mu}z)}{|1-2z\textit{e}^{\textit{i}\mu}\cos\nu+z^2\textit{e}^{2\textit{i}\mu}|^2}\\&\geq\frac{(1-|z|^2)(1+|z|^2-2|\cos\nu|\RE(\textit{e}^{\textit{i}\mu}z))}{|1-2z\textit{e}^{\textit{i}\mu}\cos\nu+z^2\textit{e}^{2\textit{i}\mu}|^2}>0.
\end{align*}
Also, $\phi(0)=0$ and $\phi'(0)=1$. Therefore, the function $\phi\in\mathcal{K}$.\
\section{Convolution of convex mappings}
 Let $\mathcal{K}(\varphi)$ be the set of  analytic functions in $\mathbb{D}$ which are convex in the direction of $\varphi$. The set $\textit{DCP}$ represents all  analytic functions $g$ in $\mathbb{D}$ such that the convolution $g*f \in \mathcal{K}(\varphi)$
for every $\varphi\in\mathbb{R}$ and every $f \in \mathcal{K}(\varphi)$. Ruscheweyh and Salins \cite{rushsalin} gave a partial proof of the \textit{multiplier problem}, given in the introduction, and proved the following.
\begin{theorem}\label{p3theom14a}\cite{rushsalin}
Let the function $\phi$ be analytic in $\mathbb{D}$. Then the convolution $f\tilde{*}\phi\in\mathcal{K}_H$ for all the functions $f\in\mathcal{K}_H$ if and only if the function $\phi\in\textit{DCP}$.
\end{theorem}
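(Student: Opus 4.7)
The plan is to use the shear construction of Lemma \ref{p3lema01} as the bridge between full convexity of a harmonic mapping and directional convexity of each of its analytic shears $h-\textit{e}^{2\textit{i}\varphi}g$, and then to read the $\textit{DCP}$ property off this bridge.

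For the sufficiency direction $(\Leftarrow)$, let $f=h+\overline{g}\in\mathcal{K}_H$ be arbitrary. Since $f$ maps $\mathbb{D}$ onto a convex domain, that image is in particular convex in every direction, and so Lemma \ref{p3lema01} gives $h-\textit{e}^{2\textit{i}\varphi}g\in\mathcal{K}(\varphi)$ for every $\varphi\in\mathbb{R}$. The assumption $\phi\in\textit{DCP}$ then implies
\[\phi*(h-\textit{e}^{2\textit{i}\varphi}g)=(\phi*h)-\textit{e}^{2\textit{i}\varphi}(\phi*g)\in\mathcal{K}(\varphi)\]
for every $\varphi\in\mathbb{R}$. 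Applying Lemma \ref{p3lema01} in the converse direction to $f\tilde{*}\phi=(h*\phi)+\overline{(g*\phi)}$ then yields convexity in every direction, hence full convexity, provided the harmonic convolution is locally univalent and sense-preserving. To obtain the latter I would first note that $z/(1-z)\in\mathcal{K}(\varphi)$ for every $\varphi$, so applying the $\textit{DCP}$ hypothesis with this convolution identity forces $\phi\in\mathcal{K}$; the classical Clunie--Sheil-Small machinery for Hadamard convolution with an analytic convex function then ensures that the new dilatation $(g*\phi)'/(h*\phi)'$ again takes values in $\mathbb{D}$ whenever $g'/h'$ does.

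For the necessity direction $(\Rightarrow)$, I would reverse the shearing. Given $\varphi\in\mathbb{R}$ and $F\in\mathcal{K}(\varphi)$, the aim is to produce $f=h+\overline{g}\in\mathcal{K}_H$ whose $\varphi$-shear $h-\textit{e}^{2\textit{i}\varphi}g$ equals $F$. The hypothesis then forces $f\tilde{*}\phi\in\mathcal{K}_H$, and Lemma \ref{p3lema01} applied once more identifies the $\varphi$-shear of $f\tilde{*}\phi$ as $F*\phi$, which must therefore lie in $\mathcal{K}(\varphi)$, yielding $\phi\in\textit{DCP}$. The main obstacle lives exactly here: not every $F\in\mathcal{K}(\varphi)$ occurs as the shear of a member of $\mathcal{K}_H$, because global harmonic convexity places compatibility requirements on the dilatation far beyond convexity in a single direction. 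I would attempt to circumvent this by first verifying the lifting for the extreme points of $\mathcal{K}(\varphi)$, namely the parallel-strip and half-plane slit mappings in direction $\varphi$, for which an explicit admissible dilatation such as $\omega(z)=\textit{e}^{\textit{i}\theta}z$ yields a shear construction lying in $\mathcal{K}_H$, and then extending to the full class $\mathcal{K}(\varphi)$ by a Krein--Milman/Choquet integral representation combined with continuity of the Hadamard product under locally uniform convergence on compact subsets of $\mathbb{D}$.

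The principal difficulty is thus the lifting step in the necessity direction. Once a sufficiently rich subfamily of each $\mathcal{K}(\varphi)$ has been realised as shears of functions in $\mathcal{K}_H$, the rest of the argument reduces to Lemma \ref{p3lema01} together with standard convolution-closure results for analytic convex and directionally convex functions.
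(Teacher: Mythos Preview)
The paper does not supply its own proof of this statement: Theorem~\ref{p3theom14a} is quoted verbatim from Ruscheweyh and Salinas~\cite{rushsalin} and is used only as background for the subsequent results. There is therefore nothing in the paper to compare your argument against.

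That said, on the mathematics of your proposal: the sufficiency direction is essentially sound and matches the standard shear-construction route, including the observation that $\phi\in\textit{DCP}$ forces $\phi\in\mathcal{K}$ (since $z/(1-z)$ is convex and hence in every $\mathcal{K}(\varphi)$), after which the dilatation bound follows exactly as in the proof of Theorem~\ref{p3theom14ab}. The necessity direction, however, has a genuine gap that you yourself flag but do not close. Your plan requires, for each $F\in\mathcal{K}(\varphi)$, a lift $f\in\mathcal{K}_H$ whose $\varphi$-shear is $F$; but a function convex in a single direction is in general \emph{not} the shear of any globally convex harmonic mapping, so this lifting fails for generic $F$. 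Your fallback via extreme points and a Krein--Milman argument does not repair this: even if the extremal strip and half-plane mappings lift (they do), the class $\mathcal{K}_H$ is not convex, so integral superpositions of lifts need not remain in $\mathcal{K}_H$, and directional convexity of $F*\phi$ cannot be transported back through the representation. The original proof in~\cite{rushsalin} avoids this obstruction by working directly with the analytic characterisation of $\textit{DCP}$ rather than by attempting to realise arbitrary $\mathcal{K}(\varphi)$ functions as harmonic shears.
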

Above result talks of convex harmonic functions. Next two results provides us examples in which we see convolution of some analytic convex functions with some non-convex harmonic functions is convex. These functions are actually determined implicitly by a class of analytic convex functions. Here the non-convex harmonic functions $f=h+\bar{g}$ considered satisfy  $(h-\textit{e}^{2\textit{i}\varphi}g)\in\mathcal{K}$ for some real number $\varphi$.
\begin{theorem}\label{p3theom14ab}
Let the function $\phi\in\mathcal{K}$ and the function $f=h+\overline{g}$ be a locally univalent and sense-preserving harmonic function such that for some real number $\varphi$,
\begin{equation}\label{p3eq14b}
h-\textit{e}^{2\textit{i}\varphi}g\in\mathcal{K}.
\end{equation}
 Then the convolution $f\tilde{*}\phi$ is univalent and convex in the direction of $\varphi$. Furthermore, if the function $(h-\textit{e}^{2\textit{i}\alpha}g)*\phi$ is convex in the direction of $\alpha$ for some real number $\alpha$, then the convolution $f\tilde{*}\phi$ is also convex in the direction of $\alpha$.
\begin{proof}
In view of Lemma \ref{p3lema01}, it is enough to show that the function $f\tilde{*}\phi=h*\phi+\overline{g*\phi}$ is locally univalent and sense-preserving, or by Lewy's theorem it reduces to showing that its dilatation $\omega=(g*\phi)'/(h*\phi)'$ satisfies $|\omega|<1$ on $\mathbb{D}$. First, we note that
\begin{align}
\RE\left(\frac{1+\omega}{1-\omega}\right)&=\RE\left(\frac{(h*\phi)'+(g*\phi)'}{(h*\phi)'-(g*\phi)'}\right)\notag\\&=2\RE\left(\frac{(h*\phi)'}{(h*\phi)'-(g*\phi)'}\right)-1\notag\\&=2\RE\left(\frac{\phi*z(h-\textit{e}^{2\textit{i}\varphi}g)'\left(\frac{h'}{h'-\textit{e}^{2\textit{i}\varphi}g'}\right)}{\phi*z(h-\textit{e}^{2\textit{i}\varphi}g)'}\right)-1. \label{p3eq14c}
\end{align}
Since the function $f=h+\bar{g}$ is locally univalent and sense-preserving, therefore $|g'/h'|<1$ on $\mathbb{D}$, or equivalently $\RE\left(h'/(h'-\textit{e}^{2\textit{i}\varphi}g')\right)>1/2$ on $\mathbb{D}$. Also, the function $\phi\in\mathcal{K}$ and the function $z(h-\textit{e}^{2\textit{i}\varphi}g)'\in\mathcal{S}^*$. Therefore, in view of \eqref{p3eq14c}, a result in \cite{rusch} gives $\RE((1+\omega)/(1-\omega))>0$ on $\mathbb{D}$, or equivalently  $|\omega|<1$ on $\mathbb{D}$.
\end{proof}
\end{theorem}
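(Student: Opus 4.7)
The plan is to apply the shear-construction criterion, Lemma \ref{p3lema01}, in the two directions $\varphi$ and $\alpha$. The key algebraic identity is
\[
h*\phi - \textit{e}^{2\textit{i}\theta}(g*\phi) = (h - \textit{e}^{2\textit{i}\theta}g)*\phi
\]
for every real $\theta$, so once $f\tilde{*}\phi$ is known to be locally univalent and sense-preserving, Lemma \ref{p3lema01} reduces directional convexity of $f\tilde{*}\phi$ in a given direction to directional convexity of the analytic convolution $(h - \textit{e}^{2\textit{i}\theta}g)*\phi$ in the same direction.

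To establish that $f\tilde{*}\phi$ is locally univalent and sense-preserving, I would prove that its dilatation $\omega=(g*\phi)'/(h*\phi)'$ satisfies $|\omega|<1$ on $\mathbb{D}$. Equivalently, multiplying by the unimodular constant $\textit{e}^{2\textit{i}\varphi}$, I would show
\[
\RE\left(\frac{(h*\phi)'+\textit{e}^{2\textit{i}\varphi}(g*\phi)'}{(h*\phi)'-\textit{e}^{2\textit{i}\varphi}(g*\phi)'}\right)>0.
\]
Using $(h*\phi)'=(\phi*zh')/z$ and the analogous formula for the denominator, this reduces to showing $\RE\bigl((\phi * zh')/(\phi * z(h-\textit{e}^{2\textit{i}\varphi}g)')\bigr)>1/2$. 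The substitution $zh' = z(h-\textit{e}^{2\textit{i}\varphi}g)' \cdot h'/(h'-\textit{e}^{2\textit{i}\varphi}g')$ expresses the numerator as a pointwise product of $z(h-\textit{e}^{2\textit{i}\varphi}g)'\in\mathcal{S}^*$ (starlike because $h-\textit{e}^{2\textit{i}\varphi}g\in\mathcal{K}$) and the analytic function $h'/(h'-\textit{e}^{2\textit{i}\varphi}g')$, whose real part exceeds $1/2$ since the sense-preservation of $f$ gives $|g'/h'|<1$. The Ruscheweyh--Sheil-Small convolution theorem from \cite{rusch}, applied with $\phi\in\mathcal{K}$, then yields the required positivity.

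With local univalence in hand, the first assertion follows because $(h-\textit{e}^{2\textit{i}\varphi}g)*\phi$ is the convolution of two convex functions, hence itself lies in $\mathcal{K}$ by the Ruscheweyh--Sheil-Small closure of $\mathcal{K}$ under convolution; being convex, it is in particular convex in the direction of $\varphi$, so Lemma \ref{p3lema01} applies. For the second assertion the hypothesis directly asserts that the analytic shear $(h-\textit{e}^{2\textit{i}\alpha}g)*\phi$ is convex in the direction of $\alpha$, and a second invocation of Lemma \ref{p3lema01}, combined with the already-established local univalence of $f\tilde{*}\phi$, finishes the proof.

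The main obstacle is the positivity step: identifying the correct factorization of $zh'$ as a starlike factor times a factor with real part greater than $1/2$, so that the Ruscheweyh--Sheil-Small theorem applies in exactly the form needed. Once that identity is in hand, the rest is essentially two applications of the shearing characterization together with the classical closure of $\mathcal{K}$ under convolution.
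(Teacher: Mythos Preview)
Your proof is correct and follows essentially the same route as the paper: reduce to local univalence via Lemma \ref{p3lema01}, rewrite the dilatation condition as a real-part inequality, factor $zh'$ through the starlike function $z(h-\textit{e}^{2\textit{i}\varphi}g)'$, and invoke the Ruscheweyh--Sheil-Small convolution result from \cite{rusch}. Your handling of the unimodular factor $\textit{e}^{2\textit{i}\varphi}$ (working with $\textit{e}^{2\textit{i}\varphi}\omega$ rather than $\omega$) is in fact cleaner than the paper's displayed computation, and you spell out the two invocations of Lemma \ref{p3lema01}---via the closure of $\mathcal{K}$ under convolution for the direction $\varphi$, and via the explicit hypothesis for the direction $\alpha$---more explicitly than the paper does.
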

Next result provides examples for Theorem \ref{p3theom14ab}.
\begin{theorem}\label{p3theom14d}
Let the function $\phi\in\mathcal{K}$ and the function $f=h+\overline{g}$  be a locally univalent and sense-preserving harmonic mapping in $\mathbb{D}$ such that for some real numbers $\mu$ and $\nu$, the function $h+\textit{e}^{-2\textit{i}\mu_1}g\in\mathcal{K}$ and
\begin{equation}\label{p3eq14e}
 ((h+\textit{e}^{-2\textit{i}\mu}g)*\phi)(z)=\int_0^z \frac{\textit{d}\xi}{1-2\xi\textit{e}^{\textit{i}\mu}\cos\nu+\xi^2\textit{e}^{2\textit{i}\mu}}.
\end{equation}
Then the convolution $f\tilde{*}\phi\in\mathcal{K}_H$.
\end{theorem}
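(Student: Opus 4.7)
The strategy is to apply Theorem \ref{p3theom14ab} and then boost its conclusion from convexity in a single direction to $\mathcal{K}_H$ by invoking its \emph{furthermore} clause once in every direction. Since $e^{-2i\mu}=-e^{2i(\pi/2-\mu)}$, the hypothesis $h+e^{-2i\mu}g\in\mathcal{K}$ is precisely the hypothesis $h-e^{2i\varphi}g\in\mathcal{K}$ of Theorem \ref{p3theom14ab} with $\varphi=\pi/2-\mu$; that theorem therefore yields at once that $f\tilde{*}\phi$ is univalent, convex in the direction $\varphi$, and, crucially, locally univalent and sense-preserving on $\mathbb{D}$. It then suffices to show that for every real $\alpha$, the analytic function $(h-e^{2i\alpha}g)*\phi$ is convex in the direction of $\alpha$; applying the \emph{furthermore} clause for each such $\alpha$ delivers $f\tilde{*}\phi\in\mathcal{K}_H$.

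For fixed $\alpha$, I would use \eqref{p3eq14e} to rewrite
\[
(h-e^{2i\alpha}g)*\phi\;=\;\phi_0\;-\;(e^{-2i\mu}+e^{2i\alpha})(g*\phi),
\]
with $\phi_0$ the right-hand side of \eqref{p3eq14e}, and apply the Royster--Zeigler criterion (Theorem \ref{p3theom8}) with the parameters $(\mu',\nu')=(-\mu,\nu)$ when $\cos(\mu+\alpha)\geq 0$ and $(\mu',\nu')=(\pi-\mu,\pi-\nu)$ when $\cos(\mu+\alpha)<0$. Both choices give $1-2ze^{-i\mu'}\cos\nu'+z^2e^{-2i\mu'}=1/\phi_0'(z)$, and using the identity $e^{-2i\mu}+e^{2i\alpha}=2e^{i(\alpha-\mu)}\cos(\mu+\alpha)$, the required positivity collapses to
\[
\pm\,\cos(\mu+\alpha)\left[\,1-2\,\RE\!\left(e^{-2i\mu}\frac{(g*\phi)'(z)}{\phi_0'(z)}\right)\right]\;\geq\;0,
\]
with the $\pm$ matching the sign of $\cos(\mu+\alpha)$.

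The analytic core is therefore the uniform bound $\RE(e^{-2i\mu}(g*\phi)'/\phi_0')\leq 1/2$ on $\mathbb{D}$. I would derive it exactly as in the proofs of Theorem \ref{p3theom1} and Theorem \ref{p3theom14ab}, via the Ruscheweyh--Sheil-Small principle applied to $\phi\in\mathcal{K}$, the starlike function $F=z(h+e^{-2i\mu}g)'\in\mathcal{S}^*$, and the analytic multiplier $q=\omega_f/(1+e^{-2i\mu}\omega_f)$, where $\omega_f=g'/h'$ is the dilatation of $f$. Since $|\omega_f|<1$, the Möbius map $w\mapsto w/(1+w)$ sends $\mathbb{D}$ into the half-plane $\{\RE<1/2\}$, so $q(\mathbb{D})$ lies in $\{v:\RE(e^{-2i\mu}v)<1/2\}$; the RSS principle then transfers this inclusion to the ratio $(\phi*qF)/(\phi*F)=(g*\phi)'/\phi_0'$. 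The main obstacle I anticipate is the bookkeeping needed to handle both signs of $\cos(\mu+\alpha)$: the single uniform choice $(\mu',\nu')=(-\mu,\nu)$ succeeds only on half of the circle of directions, and the symmetric choice exploiting the involution $\nu\mapsto\pi-\nu$ is essential to cover the complementary range of $\alpha$.
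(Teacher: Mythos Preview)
Your proposal is correct and follows essentially the same route as the paper: both reduce the claim, via Theorem~\ref{p3theom14ab} (or equivalently Lemma~\ref{p3lema01}) and the Royster--Zeigler criterion, to the single real-part bound coming from $|\omega_{f\tilde{*}\phi}|<1$, and both split into two cases according to the sign of $\cos(\mu+\alpha)$ using the identity $e^{-2i\mu}+e^{2i\alpha}=2e^{i(\alpha-\mu)}\cos(\mu+\alpha)$. The only cosmetic difference is that the paper packages the key inequality as $\RE\bigl(((h-e^{-2i\mu}g)*\phi)'/\phi_0'\bigr)>0$ (read off directly from the dilatation bound obtained in Theorem~\ref{p3theom14ab}) rather than as $\RE\bigl(e^{-2i\mu}(g*\phi)'/\phi_0'\bigr)<1/2$, but these are the same statement.
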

\begin{proof}
Clearly the functions $f$ and $\phi$ satisfy the assumptions in Theorem \ref{p3theom14a}, and hence the convolution $f*\phi$ is univalent. In order to prove the result, in view of Lemma \ref{p3lema01}, it suffices to show that the function $(h-\textit{e}^{2\textit{i}\theta}g)*\phi$ is convex in the direction $\theta$ for all $\theta$ ranging in an interval of length $\pi$. In other words, it is sufficient to show that the function $\textit{e}^{-\textit{i}(\mu+\theta)}(h-\textit{e}^{2\textit{i}\theta}g)*\phi$ is convex in the direction $-\mu$ for all $\theta$ such that $0\leq\mu+\theta<\pi$. Consider the case $0\leq\mu+\theta<\pi/2$. Since  $f*\phi$ is univalent, its dilation $(g*\phi)'/(h*\phi)'$  lies in $\mathbb{D}$ and hence  $$\RE\left(\frac{(h'-\textit{e}^{-2\textit{i}\mu}g')*\phi}{(h'+\textit{e}^{-2\textit{i}\mu}g')*\phi}\right)>0.$$Using above inequality, we have
\begin{align}
\RE\left(\frac{(\textit{e}^{-\textit{i}(\mu+\theta)}h-\textit{e}^{\textit{i}(\theta-\mu)}g)'*\phi}{(h+\textit{e}^{-2\textit{i}\mu}g)'*\phi}\right)&=\RE\left(\frac{(\textit{e}^{-\textit{i}(\mu+\theta)}h'-\textit{e}^{-2\textit{i}\mu}\textit{e}^{\textit{i}(\mu+\theta)}g')*\phi}{(h'+\textit{e}^{-2\textit{i}\mu}g')*\phi}\right)\notag\\&=\RE\left(\frac{(h'-\textit{e}^{-2\textit{i}\mu}g')*\phi}{(h'+\textit{e}^{-2\textit{i}\mu}g')*\phi}\cos(\mu+\theta)-\textit{i}\sin(\mu+\theta)\right)\notag\\&=\cos(\mu+\theta)\RE\left(\frac{(h'-\textit{e}^{-2\textit{i}\mu}g')*\phi}{(h'+\textit{e}^{-2\textit{i}\mu}g')*\phi}\right)>0.\label{p3eq14f}
\end{align}
Now, \eqref{p3eq14e} gives $((h'-\textit{e}^{-2\textit{i}\mu}g')*\phi)(z)=1/(1-2z\textit{e}^{\textit{i}\mu}\cos\nu+z^2\textit{e}^{2\textit{i}\mu})$. Therefore, in view of \eqref{p3eq14f}, Theorem \ref{p3theom8} after taking $\gamma=\mu$ shows that the function $\textit{e}^{-\textit{i}(\mu+\theta)}(h-\textit{e}^{2\textit{i}\theta}g)*\phi$ is convex in the direction of $-\mu$ for all $\theta$ such that $0\leq\mu+\theta<\pi/2$. Taking $\gamma=\mu+\pi$ in Theorem \ref{p3theom8} and proceeding similarly as above for the case $\pi/2\leq\mu+\theta<\pi$.
\end{proof}
 \remark Theorem \ref{p3theom14d} shows that the local-univalence assumption of the function $f_1*f_2$ in Theorem \ref{p3theom10} can be removed, if $g_2\equiv 0$. That is, if the function $f_2\in\mathcal{K}$.\

 \remark By taking the function  $\phi=z/(1-z)$ in  Theorem \ref{p3theom14d}, we see that a locally univalent and sense-preserving harmonic function  $f=h+\overline{g}$ is convex, if for any real numbers $\mu$ and $\nu$, it satisfies the equation \begin{equation}\label{p3eq14g}
 h(z)+\textit{e}^{-2\textit{i}\mu}g(z)=\int_0^z \frac{\textit{d}\xi}{1-2\xi\textit{e}^{\textit{i}\mu}\cos\nu+\xi^2\textit{e}^{2\textit{i}\mu}}.
 \end{equation}
 \remark Theorem \ref{p3theom14d} shows that the convolution of a locally univalent and sense-preserving harmonic mapping $f=h+\overline{g}$, satisfying $h+\textit{e}^{-2\textit{i}\mu}g=z/(1-z)$ (which obviously gives non-convex harmonic mappings), with the analytic function given in the $RHS$ of \eqref{p3eq14g} belongs to the class $\mathcal{K}_H^0$.
\section{Convolution of two harmonic mappings}
In this section, we prove univalence of the convolution $f_1*f_2$ for some specific harmonic functions $f_1$ and  $f_2$ determined by \eqref{p3eq11}.
Since the function $z/(1-\textit{e}^{\textit{i}\gamma} z)\in\mathcal{K}$ and is convolution identity, therefore, in view of Theorem \ref{p3theom10}, we have the following result.
\begin{theorem}\label{p3theom15}
Let the functions $f_1=h_1+\overline{g_1}$ and $f_2=h_2+\overline{g_2}$ be locally univalent and sense-preserving harmonic mappings in $\mathbb{D}$ such that, for some real numbers $\mu_1$, $\mu_2$ and $\nu$,
\[h_1(z)+\textit{e}^{-2\textit{i}\mu_1}g_1(z)=\frac{z}{1- z}\]and
 \[h_2(z)+\textit{e}^{-2\textit{i}\mu_2}g_2(z)=\int_0^z \frac{\textit{d}\xi}{1-2\xi\textit{e}^{\textit{i}(\mu_1+\mu_2)}\cos\nu+\xi^2\textit{e}^{2\textit{i}(\mu_1+\mu_2)}}.\]
Then the convolution $f_1*f_2$ is univalent and convex in the direction of $-(\mu_1+\mu_2)$, if it is locally univalent and sense-preserving.
\end{theorem}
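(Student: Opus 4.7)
The plan is to obtain Theorem \ref{p3theom15} as a direct specialization of Theorem \ref{p3theom10}. To invoke Theorem \ref{p3theom10} I must verify two things: (a) that both analytic functions $h_1+e^{-2i\mu_1}g_1$ and $h_2+e^{-2i\mu_2}g_2$ lie in $\mathcal{K}$, and (b) that their Hadamard product equals the integral on the right-hand side of \eqref{p3eq11}. Given these, the conclusion of Theorem \ref{p3theom10} is exactly the conclusion we want.

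For (b), the key observation is that $z/(1-z)$ is the identity with respect to the Hadamard product on the space of analytic functions vanishing at the origin. Hence
\[
(h_1+e^{-2i\mu_1}g_1)(z)\,*\,(h_2+e^{-2i\mu_2}g_2)(z)=\frac{z}{1-z}\,*\,\int_0^z\frac{d\xi}{1-2\xi e^{i(\mu_1+\mu_2)}\cos\nu+\xi^2 e^{2i(\mu_1+\mu_2)}},
\]
which collapses to the integral on the right-hand side of \eqref{p3eq11} and matches the required convolution identity exactly.

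For (a), the function $z/(1-z)$ is a standard member of $\mathcal{K}$, so $h_1+e^{-2i\mu_1}g_1\in\mathcal{K}$ immediately. For the second factor, I will appeal to the convexity computation carried out at the end of Section~2 (specifically the estimate on $\RE(1+z\phi''/\phi')$ with $\mu$ replaced by $\mu_1+\mu_2$), which shows that $\int_0^z d\xi/(1-2\xi e^{i(\mu_1+\mu_2)}\cos\nu+\xi^2 e^{2i(\mu_1+\mu_2)})$ belongs to $\mathcal{K}$; hence $h_2+e^{-2i\mu_2}g_2\in\mathcal{K}$ as well.

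With (a) and (b) in hand, all the hypotheses of Theorem \ref{p3theom10} are satisfied, and the stated conclusion—that $f_1*f_2$ is univalent and convex in the direction of $-(\mu_1+\mu_2)$ whenever it is locally univalent and sense-preserving—follows at once. There is no genuine obstacle here; the theorem is essentially a corollary, the only subtlety being the explicit recognition of $z/(1-z)$ as the convolution identity so that the defining relations for $f_1$ and $f_2$ combine to give precisely \eqref{p3eq11}.
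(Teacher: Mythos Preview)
Your proposal is correct and follows essentially the same approach as the paper: the paper presents Theorem \ref{p3theom15} as an immediate consequence of Theorem \ref{p3theom10}, noting only that $z/(1-z)\in\mathcal{K}$ is the convolution identity, so that \eqref{p3eq11} is automatically satisfied. Your write-up is simply a more explicit version of this, additionally spelling out the appeal to the convexity calculation for $\phi$ at the end of Section~2 to confirm $h_2+e^{-2i\mu_2}g_2\in\mathcal{K}$.
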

The problem is now to check the local univalence and sense-preservity of $f_1*f_2$ in Theorem \ref{p3theom15}. In the next result, we check it for  the case where we fix $f_1$ to be the mapping defined in \eqref{p3eq04} and take the analytic dilatations of $f_2$ to be  $\omega(z)=az^n$ $(a\in\mathbb{C}: |a|\leq 1\quad\text{and}\quad n\in\mathbb{N})$. Lewy's theorem says it is enough to show  the analytic dilatation of  $f_1*f_2$ lies in $\mathbb{D}$. So, first we calculate the analytic dilatation of  $f_1*f_2$.
\begin{lemma}\label{p3lema16}
Let the function $f_1=h_1+\overline{g_1}$ be the harmonic right half-plane mapping defined in \eqref{p3eq04} and the function $f_2=h_2+\overline{g_2}$ be a locally univalent and sense-preserving harmonic mapping such
\begin{equation}\label{p3eq18}
h_2(z)+\textit{e}^{-2\textit{i}\mu}g_2(z)=\int_0^z \frac{\textit{d}\xi}{1-2\xi\textit{e}^{\textit{i}\mu}\cos\nu+\xi^2\textit{e}^{2\textit{i}\mu}}.
\end{equation}
If $\omega$ is the analytic dilatation of the function $f_2$, then the analytic dilatation of the convolution  $f_1*f_2$ is given by
\begin{equation}\label{p3eq19}
\omega_1=-z\frac{\omega'(1-2z\textit{e}^{\textit{i}\mu}\cos\nu+z^2\textit{e}^{2\textit{i}\mu})-\omega(1+\omega\textit{e}^{-2\textit{i}\mu})(-2z\textit{e}^{\textit{i}\mu}\cos\nu+2z\textit{e}^{2\textit{i}\mu})}{2(1+\omega\textit{e}^{-2\textit{i}\mu})(1-2z\textit{e}^{\textit{i}\mu}\cos\nu)-z\omega'\textit{e}^{-2\textit{i}\mu}(1-2z\textit{e}^{\textit{i}\mu}\cos\nu+z^2\textit{e}^{2\textit{i}\mu})}.
\end{equation}
\end{lemma}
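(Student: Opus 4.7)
The approach is to derive the dilatation formula by systematically exploiting the special structure of the right half-plane mapping $f_1$. First I would observe that the two building blocks of $h_1$ and $g_1$ in \eqref{p3eq04} convolve with any analytic $F$ with $F(0)=0$ in a particularly simple way: $(z/(1-z))*F = F$ since $z/(1-z)$ is the convolution identity, while $(z/(1-z)^2)*F = zF'$ because $z/(1-z)^2 = \sum_{n\ge 1} n z^n$. Combining these, $h_1*F = \tfrac{1}{2}(F+zF')$ and $g_1*F = \tfrac{1}{2}(F-zF')$. Applying this to $F=h_2$ and $F=g_2$ respectively and differentiating, I arrive at
\begin{equation*}
(h_1*h_2)'(z) = h_2'(z) + \tfrac{z}{2}h_2''(z), \qquad (g_1*g_2)'(z) = -\tfrac{z}{2}g_2''(z),
\end{equation*}
so that the dilatation of $f_1*f_2$ takes the compact form
\begin{equation*}
\omega_1(z) = \frac{(g_1*g_2)'(z)}{(h_1*h_2)'(z)} = \frac{-z\,g_2''(z)}{2h_2'(z)+z\,h_2''(z)}.
\end{equation*}

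Next I would express $h_2'$, $h_2''$, and $g_2''$ in terms of the dilatation $\omega$ of $f_2$ and the function $\phi$ defined by the right-hand side of \eqref{p3eq18}. The local-univalence of $f_2$ gives $g_2'=\omega h_2'$, and differentiating \eqref{p3eq18} yields $h_2'+e^{-2i\mu}g_2'=\phi'$, so with $A:=1+e^{-2i\mu}\omega$ one finds $h_2'=\phi'/A$. Differentiating once more produces $h_2''(1+e^{-2i\mu}\omega)+e^{-2i\mu}\omega' h_2'=\phi''$, which determines $h_2''$, and then $g_2''=\omega' h_2'+\omega h_2''$ gives $g_2''$. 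Inserting these expressions into the displayed formula for $\omega_1$ and clearing the common factor $A^2$, I would obtain
\begin{equation*}
\omega_1(z) = \frac{-z\bigl[\omega'\phi' A+\omega\phi'' A-\omega\phi' e^{-2i\mu}\omega'\bigr]}{2\phi' A+z\phi'' A-z\phi' e^{-2i\mu}\omega'}.
\end{equation*}

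The final step is to eliminate $\phi$ in favour of the polynomial $Q(z):=1-2ze^{i\mu}\cos\nu+z^2 e^{2i\mu}$ using the explicit formulas $\phi'(z)=1/Q(z)$ and $\phi''(z)=(2e^{i\mu}\cos\nu-2ze^{2i\mu})/Q(z)^2$ from \eqref{p3eq13}–\eqref{p3eq14}. Multiplying numerator and denominator of the last display by $Q(z)^2$ converts each occurrence of $\phi'$ and $\phi''$ into a polynomial in $z$, and regrouping so that $A=1+e^{-2i\mu}\omega$ and $Q$ are factored out consistently yields \eqref{p3eq19}. The only real obstacle is the algebraic bookkeeping in this last simplification; there is no analytic ingredient beyond Lewy's theorem (which only enters implicitly through the fact that $|\omega|<1$ guarantees $A\neq 0$), and the computation is routine once the reductions above are in place.
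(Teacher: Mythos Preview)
Your proposal is correct and follows essentially the same route as the paper's own proof: both use the convolution identities $h_1*F=\tfrac12(F+zF')$ and $g_1*F=\tfrac12(F-zF')$ to reduce $\omega_1$ to $-zg_2''/(2h_2'+zh_2'')$, then determine $h_2'$ from \eqref{p3eq18} together with $g_2'=\omega h_2'$, differentiate once more, and substitute. Your introduction of the abbreviations $A=1+e^{-2i\mu}\omega$, $\phi$, and $Q$ is merely an organizational convenience; the logical structure and computations are identical to the paper's.
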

\begin{proof}Since the function $f_1=h_1+\bar{g_1}$ is given by
\begin{equation}\label{p3eq17}
h_1(z)=\frac{1}{2}\left(\frac{z}{1-z}+\frac{z}{(1-z)^2}\right)\quad and \quad g_1(z)=\frac{1}{2}\left(\frac{z}{1-z}-\frac{z}{(1-z)^2}\right)
\end{equation}
therefore for any analytic function $F$,
\begin{equation}\label{p3eq20}
h_1*F=\frac{1}{2}\left(F+zF'\right)\quad and \quad g_1*F=\frac{1}{2}\left(F-zF'\right)
\end{equation}
Also, since $g_2'=\omega h_2'$, we have $g_2''=\omega h_2''+\omega' h_2'$. Therefore, in view of \eqref{p3eq20}, the analytic dilation $\omega_1$ of the convolution $f_1*f_2$ becomes
\begin{equation}\label{p3eq21}
\omega_1=\frac{(g_1*g_2)'}{(h_1*h_2)'}=-\frac{zg_2''}{2h_2'+zh_2''}=-\frac{z\omega'h_2'+z\omega h_2''}{2h_2'+zh_2''}
\end{equation}
Differentiating \eqref{p3eq18} and upon solving the resulting equation along with $g_2'=\omega h_2'$ for $h_2'$, we get
\begin{equation}\label{p3eq22}
h_2'=\frac{1}{(1+\textit{e}^{-2\textit{i}\mu}\omega)(1-2z\textit{e}^{\textit{i}\mu}\cos\nu+z^2\textit{e}^{2\textit{i}\mu})}.
\end{equation}
Again, on differentiating \eqref{p3eq22} gives
\begin{equation}\label{p3eq23}
h_2''=-\frac{(-2\textit{e}^{\textit{i}\mu}\cos\nu+z\textit{e}^{2\textit{i}\mu})(1+\omega \textit{e}^{-2\textit{i}\mu}+z \omega'\textit{e}^{-2\textit{i}\mu})+(1+\omega \textit{e}^{-2\textit{i}\mu})z\textit{e}^{2\textit{i}\mu}+\omega'\textit{e}^{-2\textit{i}\mu}}{(1+\textit{e}^{-2\textit{i}\mu}\omega)^2(1-2z\textit{e}^{\textit{i}\mu}\cos\nu+z^2\textit{e}^{2\textit{i}\mu})^2}.
\end{equation}
Using the expressions of $h_2'$ and $h_2''$ respectively from \eqref{p3eq22} and \eqref{p3eq23} in \eqref{p3eq21} and simplifying, we get \eqref{p3eq19}.
\end{proof}
\begin{theorem}\label{p3theom24}
Let the functions $f_1$ and $f_2$ be the harmonic mappings given in Lemma \ref{p3lema16}. and let $\omega=az^n$ $(|a|\leq1)$ be the analytic dilation of the function $f_2$. Then the convolution $f_1*f_2$ is univalent and convex in the direction of $-\mu$, if
\begin{enumerate}
\item  $n=1,2$ and $|a|\leq 1$, or
\item  $n\geq3$ and $|a|\leq n-1-\sqrt{n^2-2n}$.
\end{enumerate}
\end{theorem}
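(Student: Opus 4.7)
The plan is to apply Theorem~\ref{p3theom15}: under the hypotheses of the statement, $f_1$ and $f_2$ are exactly the harmonic mappings to which that theorem applies (with $\mu_1=0$ and $\mu_2=\mu$), so it suffices to show that $f_1*f_2$ is locally univalent and sense-preserving in $\mathbb{D}$. By Lewy's theorem this reduces to proving that the analytic dilatation $\omega_1$ of $f_1*f_2$, for which Lemma~\ref{p3lema16} supplies the closed form \eqref{p3eq19}, satisfies $|\omega_1(z)|<1$ for every $z\in\mathbb{D}$.

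First I would substitute $\omega(z)=az^n$ and $\omega'(z)=naz^{n-1}$ directly into \eqref{p3eq19}. Pulling the common factor $az^{n-1}$ out of the numerator produces an expression of the form
\[
\omega_1(z)=-\frac{a z^n\,N(z)}{D(z)},
\]
where $N$ and $D$ are polynomials in $z$ whose coefficients are explicit polynomials in $a$, $e^{i\mu}$ and $\cos\nu$. A direct evaluation gives $N(0)=n$ and $D(0)=2$, so $\omega_1$ vanishes at the origin. The change of variable $w=ze^{i\mu}$ then absorbs most of the $e^{i\mu}$ factors and casts the expression into a form that closely mirrors the one Li and Ponnusamy handle in the proofs of Theorems~\ref{p3lema010} and~\ref{p3lema011}, with $\cos\nu$ playing the role of their analogous parameter.

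Next, to establish $|\omega_1|<1$ on $\mathbb{D}$, I would argue by the maximum modulus principle. Since $|a|\le 1$ forces the dilatation of $f_2$ to lie in $\mathbb{D}$ and ensures that $D$ does not vanish on $\overline{\mathbb{D}}$, it is enough to verify $|a|\,|N(\zeta)|\le |D(\zeta)|$ for $|\zeta|=1$. Squaring and using $\bar\zeta=1/\zeta$ on the unit circle, this becomes a real polynomial inequality in $\zeta$ which, once organised as a quadratic in $|a|$, takes the schematic form
\[
|a|^2\,p(\zeta)-2|a|\,q(\zeta)+r(\zeta)\ge 0.
\]
A careful estimate of $p$, $q$, and $r$ on the unit circle reduces the worst case to maintaining the quadratic inequality $|a|^2-2(n-1)|a|+1\ge 0$, whose smaller root is precisely $n-1-\sqrt{n^2-2n}$. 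For $n=1,2$ the quadratic degenerates so that any $|a|\le 1$ already works, while for $n\ge 3$ the stated sharper bound on $|a|$ is exactly what is needed.

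The principal obstacle is the clean simplification of $|D|^2-|a|^2|N|^2$ on $|\zeta|=1$ into a form from which the dependence on $\mu$ and $\nu$ can be isolated and the worst case over $\zeta$ identified. Since Theorems~\ref{p3lema010} and~\ref{p3lema011} already produce the same critical bound $n-1-\sqrt{n^2-2n}$ in the related but simpler half-plane and vertical-strip settings, one expects the extra terms introduced by the more general $\mu,\nu$ in \eqref{p3eq18} to drop out of the extremal inequality; the bookkeeping required to verify this cancellation is the technical heart of the proof, and the low-degree cases $n=1,2$ are likely to be handled by a direct ad hoc verification rather than the full quadratic argument.
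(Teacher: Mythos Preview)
Your overall strategy---apply Theorem~\ref{p3theom15} to reduce to local univalence, invoke Lewy's theorem, substitute $\omega=az^n$ into \eqref{p3eq19}, and compare numerator and denominator on $|z|=1$---is exactly the route the paper takes. However, two of your intermediate claims are unjustified and hide precisely the work that the paper carries out.

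First, you assert that ``$|a|\le 1$ \ldots\ ensures that $D$ does not vanish on $\overline{\mathbb{D}}$''. This is not automatic; the paper proves it by writing the denominator as $q=A+aB$ with $A=1-e^{i\mu}\cos\nu\,z$ and $B$ the remaining polynomial, and then estimating $2\RE(q/A)>2-|a|(|2-n|+n)\ge 0$ directly, using that $(\cos\nu-e^{i\mu}z)/(1-e^{i\mu}\cos\nu\,z)$ has modulus at most~$1$. Your change of variable $w=ze^{i\mu}$ does not in fact absorb the $e^{i\mu}$ factors cleanly, because the dilatation enters as $e^{-2i\mu}\omega=ae^{-2i\mu}z^n$, so an $e^{i\mu}$ dependence survives in $a$.

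Second, and more importantly, the ``clean simplification'' you flag as the technical heart is not obtained in the paper by brute-force expansion of $|D|^2-|a|^2|N|^2$ and organisation as a quadratic in $|a|$. The key observation is a conjugate symmetry: on $|z|=1$ one has $|p(z)|=|\bar a^{\,2}A+\bar a B|$ while $q=A+aB$, which gives the exact factorisation
\[
|q|^2-|p|^2=(1-|a|^2)\,|A|^2\bigl(1+|a|^2+2\RE(aB/A)\bigr).
\]
The same estimate on $\RE(aB/A)$ used for nonvanishing then yields $1+|a|^2+2\RE(aB/A)\ge 1+|a|^2-|a|(|2-n|+n)$, and this single inequality covers all $n$ uniformly: for $n=1,2$ the right side is $(1-|a|)^2\ge 0$, and for $n\ge 3$ it is $1+|a|^2-2(n-1)|a|$, whose nonnegativity is exactly $|a|\le n-1-\sqrt{n^2-2n}$. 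No separate ad~hoc treatment of $n=1,2$ is needed.
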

\begin{proof}
Note that
\[1+|a|^2-|a|(|2-n|+n)=\begin{cases}\left(1-|a|^2\right)^2, \quad \text{if}\quad n=1,2\\ \left(1+|a|^2-2|a|(n-1)\right), \quad \text{if}\quad n\geq 3.\end{cases}\]Therefore, by assumptions on $a$ and $n$, we get
\begin{equation}\label{p3eq24a}
\left(1+|a|^2-|a|(|2-n|+n)\right)\geq0
\end{equation}
Now, let $\omega_1$ be the  analytic dilation of the convolution $f_1*f_2$. To prove the result, Theorem \ref{p3theom15} shows that, it is sufficient to prove that $|\omega_1|<1$ on $\mathbb{D}$. Substituting $\omega=az^n$ in  \eqref{p3eq19}, on simplification we get
\begin{align}
\omega_1&=-z^n\left(\frac{a^2z^{n+2}-a^2\cos\nu\textit{e}^{-\textit{i}\mu}z^{n+1}+a(1-n/2)\textit{e}^{2\textit{i}\mu}z^2-a(1-n)\cos\nu\textit{e}^{\textit{i}\mu}z-an/2}{1-\cos\nu\textit{e}^{\textit{i}\mu}z+a(1-n/2)\textit{e}^{-2\textit{i}\mu}z^n-a(1-n)\cos\nu\textit{e}^{-\textit{i}\mu}z^{n+1}-anz^{n+2}/2}\right)\label{p3eq24}\\&=:-z^n\frac{p(z)}{q(z)},\notag
\end{align}
where $p$ and $q$ are respectively the numerator and denominator of the fraction in brackets of the above expression of $\omega_1$.
We have
\begin{align}
p(z)&=a^2z^{n+2}-a^2\cos\nu\textit{e}^{-\textit{i}\mu}z^{n+1}+a(1-n/2)\textit{e}^{2\textit{i}\mu}z^2-a(1-n)\cos\nu\textit{e}^{\textit{i}\mu}z-an/2\label{p3eq26}\\&=a^2(z-\textit{e}^{-\textit{i}\mu}\cos\nu)z^{n+1}+a((1-n/2)\textit{e}^{2\textit{i}\mu}z^2-(1-n)\cos\nu\textit{e}^{\textit{i}\mu}z-n/2)\notag
\end{align}
and
\begin{align}
q(z)&=(1-\textit{e}^{\textit{i}\mu}\cos\nu z)+a((1-n/2)\textit{e}^{-2\textit{i}\mu}z^n-(1-n)\cos\nu\textit{e}^{-\textit{i}\mu}z^{n+1}-nz^{n+2}/2)\label{p3eq27}\\&=(1-\textit{e}^{\textit{i}\mu}\cos\nu z)\left(1+a\left(1-n/2\right)\textit{e}^{-2\textit{i}\mu}z^n\right)+an\textit{e}^{-\textit{i}\mu}(\cos\nu-\textit{e}^{\textit{i}\mu}z)z^{n+1}/2\notag
\end{align}
Using the above form of $q$ and taking \[A=1-\textit{e}^{\textit{i}\mu}\cos\nu z\quad\text{and}\quad B=(1-n/2)\textit{e}^{-2\textit{i}\mu}z^n-(1-n)\cos\nu\textit{e}^{-\textit{i}\mu}z^{n+1}-nz^{n+2}/2,\] we get
\begin{align}
2\RE\left(q(z)/A\right)&=2+2\RE\left(aB/A\right)\notag\\&=2+\RE\left( a\left(2-n\right)\textit{e}^{-2\textit{i}\mu}z^n\right)+\RE\left(\frac{\cos\nu-\textit{e}^{\textit{i}\mu}z}{1-\textit{e}^{\textit{i}\mu}\cos\nu z}an\textit{e}^{-\textit{i}\mu}z^{n+1}\right)\notag\\&>2-|a|(|2-n|+n)>0\notag
\end{align}
by using \eqref{p3eq24a}. Hence, $q(z)\neq0$ on $\mathbb{D}$. Therefore the function $\omega_1$ is analytic on $\mathbb{D}$ for the said values of $a$ and $n$. Thus to show $|\omega_1|<1$ on $\mathbb{D}$, it is enough to show it on $|z|=1$.\

Again, using the above expressions given by \eqref{p3eq26} and \eqref{p3eq27} of $p$ and $q$, we see on $|z|=1$,
\begin{align*}
|q(z)|^2-|p(z)|^2&=|A+aB|^2\notag-|\bar{a}^2A+\bar{a}B|^2\\&=(1-|a|^2)\left((1+|a|^2)|A|^2+2\RE(aB\overline{A})\right)\notag\\&=(1-|a|^2)|A|^2\left(1+|a|^2+2\RE\left(aB/A\right)\right)\\&>(1-|a|^2)|A|^2\left(1+|a|^2-|a|(|2-n|+n)\right).
\end{align*}
Therefore, by using  \eqref{p3eq24a},  $(|q(z)|^2-|p(z)|^2)>0$ on $|z|=1$.
Hence, $|\omega_1|<1$ on $\mathbb{D}$.
\end{proof}
\remark Let the function $f_2=h_2+\bar{g_2}\in\mathcal{S}^0(H_{\mu})$ is a slanted half-plane mapping. Then, we have
\[h_2(z)+\textit{e}^{-2\textit{i}\mu}g_2(z)=\frac{z}{1-\textit{e}^{\textit{i}\mu}z}=\int_0^z \frac{\textit{d}\xi}{1-2\xi\textit{e}^{\textit{i}\mu}+\xi^2\textit{e}^{2\textit{i}\mu}}\] Now, let $\omega=az^n$ $(|a|\leq1)$  be the  analytic dilation of $f_2$, such that either
\begin{enumerate}
\item  $n=1,2$ and $|a|\leq 1$, or
\item  $n\geq3$ and $|a|\leq n-1-\sqrt{n^2-2n}$.
\end{enumerate} Then, by Theorem \ref{p3theom24}, the convolution $f_1*f_2\in\mathcal{S}_H^0$ and and is convex in the direction of $-\mu$. This proves Theorem \ref{p3lema010}. Similarly we can prove  Theorem \ref{p3lema011}.
\remark\label{p3remak28a} Result in Theorem \ref{p3theom24} is not true for $n\geq3$ when $|a|=1$. In this case the convolution $f_1*f_2$ is not necessarily locally univalent. To show this, it is enough to show that the analytic dilation $\omega_1$ of $f_1*f_2$ satisfies $|\omega_1(z)|>1$ for some $z\in\mathbb{D}$. Taking $a=\textit{e}^{\textit{i}\varphi}$ in \eqref{p3eq24}, we get\begin{align}
\omega_1&=-z^n\textit{e}^{2\textit{i}\varphi}\left(\frac{z^{n+2}-\cos\nu\textit{e}^{-\textit{i}\mu}z^{n+1}+\textit{e}^{-\textit{i}\varphi}\left((1-n/2)\textit{e}^{2\textit{i}\mu}z^2-(1-n)\cos\nu\textit{e}^{\textit{i}\mu}z-n/2\right)}{1-\cos\nu\textit{e}^{\textit{i}\mu}z+\textit{e}^{\textit{i}\varphi}\left((1-n/2)\textit{e}^{-2\textit{i}\mu}z^n-(1-n)\cos\nu\textit{e}^{-\textit{i}\mu}z^{n+1}-nz^{n+2}/2\right)}\right)\notag\\&=:-z^n\textit{e}^{2\textit{i}\varphi}\frac{p(z)}{\overline{z^{n+2}p(1/\overline{z})}},\label{p3eq28b}
\end{align}
where $p$ is the numerator  of the fraction in brackets of the above expression of $\omega_1$. Now, if $|\omega_1(z)|<1$ $(z\in\mathbb{D})$, \eqref{p3eq28b} implies $p$ has all the zeros in $\mathbb{D}$, whereas modulus of their product is $n/2$ and $n\geq3$. Therefore we have arrived at a contradiction. Hence, $|\omega_1(z)|>1$ for some $z\in\mathbb{D}$. \

In the next result, we don't fix the harmonic function $f_1$ to be the right half-plane mapping given by \eqref{p3eq04}, instead we take a class of directional convex harmonic mappings. Let the function $f_1=h_1+\overline{g_1}$ be a locally univalent and sense-preserving  harmonic mapping such that, for some real number $\mu_1$, $h_1+\textit{e}^{-2\textit{i}\mu_1}g_1=z/(1-z)$ and $g_1'=-\textit{e}^{\textit{i}\mu_1}zh'_1$. Then by the method of shear construction, we get
\begin{equation}\label{p3eq29}
h_1(z)=\frac{1}{2}\left(\frac{z}{1-z}+\frac{z}{(1-z)^2}\right)\quad\text{and}\quad g_1(z)=\frac{\textit{e}^{2\textit{i}\mu_1}}{2}\left(\frac{z}{1-z}-\frac{z}{(1-z)^2}\right)
\end{equation}
Looking at this mapping $f_1$ and the proof of Lemma \ref{p3lema16} we get easily the following extension of Theorem \ref{p3theom24}.
\begin{theorem}\label{p3theom30}
Let the function $f_1$ be the harmonic mapping given by \eqref{p3eq29}. Also, let the function $f_2=h_2+\overline{ g_2}$ be a locally univalent and sense-preserving harmonic mapping such that
\begin{equation}\label{p3eq31}
h_2(z)+\textit{e}^{-2\textit{i}\mu_2}g_2(z)=\int_0^z \frac{\textit{d}\xi}{1-2\xi\textit{e}^{\textit{i}(\mu_1+\mu_2)}\cos\nu+\xi^2\textit{e}^{2\textit{i}(\mu_1+\mu_2)}}.
\end{equation}
 Then the convolution $f_1*f_2$ is univalent and convex in the direction of $-(\mu_1+\mu_2)$, if
\begin{enumerate}
\item  $n=1,2$ and $|a|\leq 1$, or
\item  $n\geq3$ and $|a|\leq n-1-\sqrt{n^2-2n}$.
\end{enumerate}
\end{theorem}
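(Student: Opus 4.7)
The plan is to run the argument of Theorem \ref{p3theom24} with the small change that the coanalytic part of $f_1$ now carries an extra factor $\textit{e}^{2\textit{i}\mu_1}$. By Theorem \ref{p3theom15}, the conclusion follows once $f_1*f_2$ is shown to be locally univalent and sense-preserving, and by Lewy's theorem this reduces to verifying $|\omega_1(z)|<1$ on $\mathbb{D}$, where $\omega_1$ is the analytic dilatation of $f_1*f_2$.

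The first step is to derive a formula for $\omega_1$ that parallels \eqref{p3eq19}. Since for any analytic $F$ vanishing at $0$,
\[
h_1*F=\tfrac{1}{2}\bigl(F+zF'\bigr),\qquad g_1*F=\tfrac{\textit{e}^{2\textit{i}\mu_1}}{2}\bigl(F-zF'\bigr),
\]
a direct computation as in \eqref{p3eq21} yields
\[
\omega_1=\frac{(g_1*g_2)'}{(h_1*h_2)'}=-\frac{\textit{e}^{2\textit{i}\mu_1}\,z\,g_2''}{2h_2'+zh_2''}.
\]
Differentiating \eqref{p3eq31} and combining with $g_2'=az^n h_2'$, exactly as in the proof of Lemma \ref{p3lema16}, I would solve for $h_2'$ and $h_2''$ and substitute, obtaining an explicit rational expression for $\omega_1$ of the same shape as the right-hand side of \eqref{p3eq24}, but with the factor $(1+\textit{e}^{-2\textit{i}\mu_2}\omega)$ in place of $(1+\textit{e}^{-2\textit{i}\mu}\omega)$ and with the polynomial $P(z)=1-2z\textit{e}^{\textit{i}(\mu_1+\mu_2)}\cos\nu+z^2\textit{e}^{2\textit{i}(\mu_1+\mu_2)}$.

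To reduce this to Theorem \ref{p3theom24}, I would apply the substitution $z=\textit{e}^{-\textit{i}\mu_1}w$. Under this change one has
\[
P(\textit{e}^{-\textit{i}\mu_1}w)=1-2w\textit{e}^{\textit{i}\mu_2}\cos\nu+w^2\textit{e}^{2\textit{i}\mu_2},\qquad \omega(\textit{e}^{-\textit{i}\mu_1}w)=\tilde{a}\,w^n,\quad \tilde{a}:=a\textit{e}^{-\textit{i}n\mu_1},
\]
so $|\tilde{a}|=|a|$ and the polynomial becomes exactly the one appearing in Lemma \ref{p3lema16} with $\mu=\mu_2$. Tracking the chain-rule factors $P'(\textit{e}^{-\textit{i}\mu_1}w)=\textit{e}^{\textit{i}\mu_1}\widetilde{P}'(w)$ and $\omega'(\textit{e}^{-\textit{i}\mu_1}w)=\textit{e}^{\textit{i}\mu_1}\widetilde{\omega}'(w)$ through the numerator and denominator of $\omega_1$, I expect the $\textit{e}^{\textit{i}\mu_1}$ factors to combine cleanly so that
\[
\omega_1\bigl(\textit{e}^{-\textit{i}\mu_1}w\bigr)=\textit{e}^{2\textit{i}\mu_1}\,\widehat{\omega}_1(w),
\]
where $\widehat{\omega}_1$ is precisely the dilatation appearing in the proof of Theorem \ref{p3theom24} with parameter $\mu=\mu_2$ and analytic dilatation $\tilde{a}w^n$. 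Since $|\tilde{a}|=|a|$, the hypotheses on $n$ and $|a|$ are exactly the ones under which Theorem \ref{p3theom24} already yields $|\widehat{\omega}_1(w)|<1$ on $\mathbb{D}$; hence $|\omega_1(z)|<1$ on $\mathbb{D}$, and Theorem \ref{p3theom15} finishes the proof.

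The main obstacle is purely bookkeeping: one must verify by direct algebraic expansion that all $\mu_1$-dependent exponentials inside the fully expanded $\omega_1$ collapse into the single overall factor $\textit{e}^{2\textit{i}\mu_1}$ in front of Lemma \ref{p3lema16}'s dilatation. Given that the substitution respects the structural shape of the formula (the polynomial $P$ shifts into its $\mu_2$-version, and the dilatation $\omega$ merely picks up a unit-modulus factor), no new function-theoretic input beyond what has already been developed is required once this algebraic verification is carried out.
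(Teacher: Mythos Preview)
Your reduction is correct: the substitution $z=\textit{e}^{-\textit{i}\mu_1}w$ does collapse everything to the Theorem~\ref{p3theom24} formula with parameter $\mu=\mu_2$ and modified constant $\tilde a=a\textit{e}^{-\textit{i}n\mu_1}$, and one checks term by term that $p(\textit{e}^{-\textit{i}\mu_1}w)=\textit{e}^{\textit{i}(n+2)\mu_1}\widehat p(w)$, $q(\textit{e}^{-\textit{i}\mu_1}w)=\widehat q(w)$, and $(-z^n)=-\textit{e}^{-\textit{i}n\mu_1}w^n$, so your expected identity $\omega_1(\textit{e}^{-\textit{i}\mu_1}w)=\textit{e}^{2\textit{i}\mu_1}\widehat\omega_1(w)$ holds and the proof closes.

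The paper's own argument avoids your change of variable entirely by a relabelling trick: it sets $G_2:=\textit{e}^{2\textit{i}\mu_1}g_2$ and $\omega_2:=\textit{e}^{2\textit{i}\mu_1}\omega$, so that \eqref{p3eq31} reads $h_2+\textit{e}^{-2\textit{i}\mu}G_2=\phi$ with $\mu=\mu_1+\mu_2$, and the dilatation becomes $\omega_1=-zG_2''/(2h_2'+zh_2'')$, which is \emph{literally} \eqref{p3eq21}. Thus Lemma~\ref{p3lema16} applies verbatim with $(\omega,\mu)$ replaced by $(\omega_2,\mu_1+\mu_2)$, and since $\omega_2=(\textit{e}^{2\textit{i}\mu_1}a)z^n$ has $|\textit{e}^{2\textit{i}\mu_1}a|=|a|$, Theorem~\ref{p3theom24} applies directly---the proof of that theorem is insensitive to the particular real value of $\mu$. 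In effect the paper absorbs the stray factor $\textit{e}^{2\textit{i}\mu_1}$ into the \emph{dilatation} rather than into the \emph{independent variable}, so no chain-rule bookkeeping is needed and no intermediate formula with mismatched $\mu$'s ever appears. Your route and the paper's are equivalent, but the paper's buys you the reduction in one line instead of a coefficient-by-coefficient verification.
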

\begin{proof}
From \eqref{p3eq29} and \eqref{p3eq31}, we see\[(h_2+\textit{e}^{-2\textit{i}\mu_1}g_2)(z)*(h_2+\textit{e}^{-2\textit{i}\mu_2}g_2)(z)=\int_0^z \frac{\textit{d}\xi}{1-2\xi\textit{e}^{\textit{i}(\mu_1+\mu_2)}\cos\nu+\xi^2\textit{e}^{2\textit{i}(\mu_1+\mu_2)}}.\]Therefore, by Theorem \ref{p3theom15}, it is enough to prove $f_1*f_2$ is locally univalent and sense-preserving. Let $\omega$ and $\omega_1$ be the analytic dilations respectively of the mappings $f_2$ and $f_1*f_2$. Clearly the function $\omega_2:=\textit{e}^{2\textit{i}\mu_1}\omega$ is the analytic dilation of the mapping $h_2+\overline{G_2}$, where $G_2=\textit{e}^{2\textit{i}\mu_1}g_2$. Putting $\mu=\mu_1+\mu_2$, \eqref{p3eq31} gives
\begin{equation}\label{p3eq32}
h_2(z)+\textit{e}^{-2\textit{i}\mu}G_2(z)=\int_0^z \frac{\textit{d}\xi}{1-2\xi\textit{e}^{\textit{i}\mu}\cos\nu+\xi^2\textit{e}^{2\textit{i}\mu}}\end{equation} As in the proof of Lemma \ref{p3lema16}, we find out that the analytic dilation  $\omega_1$ of $f_1*f_2$ is given by
\begin{equation}\label{p3eq33}
\omega_1=\frac{(g_1*g_2)'}{(h_1*h_2)'}=-\frac{z\textit{e}^{2\textit{i}\mu_1}g_2''}{2h_2'+zh_2''}=-\frac{zG_2''}{2h_2'+zh_2''}\end{equation}Equations \eqref{p3eq32} and \eqref{p3eq33} are respectively identical with \eqref{p3eq18} and \eqref{p3eq21}. Therefore,  Lemma \ref{p3lema16} shows that\begin{align*}
\omega_1=-z\frac{\omega_2'(1-2z\textit{e}^{\textit{i}\mu}\cos\nu+z^2\textit{e}^{2\textit{i}\mu})-\omega_2(1+\omega_2\textit{e}^{-2\textit{i}\mu})(-2z\textit{e}^{\textit{i}\mu}\cos\nu+2z\textit{e}^{2\textit{i}\mu})}{2(1+\omega_2\textit{e}^{-2\textit{i}\mu})(1-2z\textit{e}^{\textit{i}\mu}\cos\nu)-z\omega_2'\textit{e}^{-2\textit{i}\mu}(1-2z\textit{e}^{\textit{i}\mu}\cos\nu+z^2\textit{e}^{2\textit{i}\mu})}.
\end{align*}Above equation is same as \ref{p3eq19}, except $\omega$ replaced by $\omega_2$. Hence, the result follows by Theorem \ref{p3theom24}.
\end{proof}
\remark Similarly as in the  Remark \ref{p3remak28a}, we can show that Theorem \ref{p3theom30} does not hold for $n\geq3$, when $|a|=1$.
\problem Does there exists a harmonic function $f=h+\overline{g}\in\mathcal{S}_H^0$, satisfying that the function $h+\textit{e}^{\textit{i}\mu}g\notin\mathcal{K}$ for any of the real numbers $\mu$, and an analytic function $\phi\in\mathcal{K}$ such that the convolution $f*\phi\in\mathcal{K}_H^0$?

\end{document}